\theoremstyle{plain}
\newtheorem{thm}{Theorem}[section]
\newtheorem{prop}[thm]{Proposition}
\newtheorem{lem}[thm]{Lemma}
\newtheorem{cor}[thm]{Corollary}
\newtheorem{rem}[thm]{Remark}
\numberwithin{equation}{section}
\newcommand{\RR}{\mathbb{R}}
\newcommand{\CC}{\mathbb{C}}
\renewcommand{\SS}{\mathbb{S}}
\newcommand{\FF}{\mathbb{F}}
\newcommand{\MM}{\mathbb{M}}
\newcommand{\re}{\mathop{\mathrm{Re}}}
\newcommand{\im}{\mathop{\mathrm{Im}}}
\newcommand{\loc}{\mathrm{loc}}
\newcommand{\ol}{\overline}
\newcommand{\pa}{\partial}
\newcommand{\lan}{\langle}
\newcommand{\ran}{\rangle}
\newcommand{\tr}{\mathop{\mathrm{tr}}}
\newcommand{\na}{\nabla}
\newcommand{\nr}{\Vert}
\newcommand{\al}{\alpha}
\newcommand{\be}{\beta}
\newcommand{\ga}{\gamma}
\newcommand{\Ga}{\Gamma}
\newcommand{\De}{\Delta}
\newcommand{\la}{\lambda}
\newcommand{\La}{\Lambda}
\newcommand{\si}{\sigma}
\newcommand{\zi}{\zeta}
\newcommand{\om}{\omega}
\newcommand{\Om}{\Omega}
\newcommand{\cC}{\mathcal{C}}
\newcommand{\cF}{{\mathcal F}}
\newcommand{\cG}{{\mathcal G}}
\newcommand{\cJ}{{\mathcal J}}
\newcommand{\cS}{{\mathcal S}}
\newcommand{\cX}{\mathcal{X}}
\newcommand{\ds}{\displaystyle}
\author[R. Durastanti]{Riccardo Durastanti}
\address[Riccardo Durastanti]{Dipartimento di Matematica ed Informatica ``U.~Dini'', Universit\`a di Firenze, viale Morgagni 67/A, 50134 Firenze, Italy}
\curraddr{Dipartimento di Matematica e Applicazioni ``R. Caccioppoli", Universit\`a degli Studi di Napoli Federico II, via Cintia, Monte S. Angelo, 80126 Napoli, Italy}
\email{riccardo.durastanti@unina.it}
\author[R. Magnanini]{Rolando Magnanini}
\address[Rolando Magnanini]{Dipartimento di Matematica ed Informatica ``U.~Dini'', Universit\`a di Firenze, viale Morgagni 67/A, 50134 Firenze, Italy}
\email{rolando.magnanini@unifi.it}
\urladdr{https://people.dimai.unifi.it/magnanini/}
\begin{document}

\title[Asymptotic characterizations of holomorphic functions]{Nonlinear asymptotic mean value characterizations of holomorphic functions}

\begin{abstract}
Starting from a characterization of holomorphic functions in terms of a suitable mean value property, we build some nonlinear asymptotic characterizations for complex-valued solutions of certain nonlinear systems, which have to do with the classical Cauchy-Riemann equations. From these asymptotic characterizations, we derive suitable asymptotic mean value properties, which are used to construct appropriate vectorial dynamical programming principles. The aim is to construct approximation schemes for the so-called contact solutions, recently introduced by N. Katzourakis, of the nonlinear systems here considered.
\end{abstract}


\keywords{Holomorphic functions, quasiregular maps, mean value property, asymptotic mean value property for nonlinear systems, contact solutions}

\subjclass[2020]{Primary 35J60, 35J46, 35J47, 30A10; Secondary 30C62, 35K55}

\maketitle

\section{Introduction}

The classical {\it mean value property} characterizes harmonic functions --- the solutions of the classical Laplace equation $\De u=0$. In fact,
a continuous function $u$ is harmonic in an open set $\Om\subseteq\RR^N$ if and only if
\begin{equation}
\label{mean-value-property}
u(x) =\frac1{|B_r(x)|}\int_{B_r(x)}u(y)\,dy \ \mbox{ or } \
u(x)=\frac1{|\pa B_r(x)|}\int_{\pa B_r(x)}u(y)\,dS_y,
\end{equation}
for every ball $B_r(x)$ with $\ol{B_r(x)}\subset\Om$. (Throughout this paper, we shall denote the boundary of $\Om$ by the letter $\Ga$. Also, we use single bars to denote both the Lebesgue measure of a measurable set and the surface measure of a surface in $\RR^N$.)
From \eqref{mean-value-property} one can derive most of the properties of harmonic functions, such as weak and strong maximum principles, smoothness, analyticity, Liouville's theorem, Harnack's inequality, and more. The \textit{exact mean value property} in \eqref{mean-value-property} may be weakened. Indeed it is sufficient that a continuous function $u$ satisfies the \textit{asymptotic mean value property}, i.e.
$$
\frac1{|B_r(x)|}\int_{B_r(x)}u(y)\,dy = u(x) + o(r^2) \ \mbox{ or } \
\frac1{|\pa B_r(x)|}\int_{\pa B_r(x)}u(y)\,dS_y = u(x) + o(r^2),
$$
as $r \to 0^+$ (see \cite{Ful, NeVe, Kuz2}).
\par
Recently, \eqref{mean-value-property} has been somewhat extended to nonlinear settings.
E. Le Gruyer \cite{LeG} noticed the relationship of the nonlinear mean defined by
\begin{equation}
\label{legruyer}
\frac12\,\max_{B_r(x)}u+\frac12\,\min_{B_r(x)}u
\end{equation}
with the \textit{absolutely minimizing Lipschitz extensions}, which in particular are $\infty$-harmonic functions, i.e. viscosity solutions of the $\infty$-Laplace equation, $\De_\infty u=0$. Here,  $\De_\infty$ denotes the (game theoretic) $\infty$-laplacian.  Extensions to the case of the $p$-Laplace equation (with $1<p\le\infty$) have been first developed in \cite{MPR1, MPR2, MPR3, LPS}, based on a linear combination of the mean values in \eqref{mean-value-property} and \eqref{legruyer}. In the nonlinear case,  the connection between (viscosity) $p$-harmonic functions and the relevant mean is obtained by adopting the notion of \textit{asymptotic mean value property} (in an appropriate viscosity sense) for a continuous function (see \cite{MPR1}, \cite{IMW}, for a definition).
The references \cite{HR, RV} contain a two-dimensional approach to the case $p=1$. The relevant nonlinear mean value chosen is the \textit{median} of a function on $B_r(x)$, which is defined as the number $\mu$ such that
\begin{equation}
\label{median}
\bigl|\{ x\in B_r(x): u(x)>\mu\}\bigr|=\bigl|\{ x\in B_r(x): u(x)<\mu\}\bigr|;
\end{equation}
$\mu$ is unique if $u$ is continuous (see \cite{No}).

We also mention that, over the last years, there has been increasing interest on asymptotic mean value formulas for nonlinear elliptic equations. Here, we refer to: \cite{GiSm, AlSm, ArrPar, Al}, for the game-theoretic Laplacian; \cite{dTLin} for the variational $p$-Laplacian; \cite{Fe, BuSq, dTMeOc, FjNyWa}, for the fractional $p$-Laplacian; \cite{MeZh}, for double-phase operators; \cite{BCMR, Kuz} for general second order elliptic equations and Helmholtz equations; \cite{BCMR-JCA}, for the Monge-Amp\`ere equation; \cite{AKE1, AKE2, MiTe} in metric measure spaces.
\par
In \cite{IMW}, the second author of this paper and his co-authors proposed to use a \textit{natural} version of mean value. This is what we call in \cite{CIMW} a \textit{variational $p$-mean}, because it  is defined as the minimum point $\mu_p^r(u)(x)$ of the function
\begin{equation}
\label{variational}
\RR\ni\mu\mapsto \nr u-\mu\nr_{p,B_r(x)},
\end{equation}
where $\nr \cdot\nr_{p,B_r(x)}$ denotes the usual norm in the Lebesgue space $L^p(B_r(x))$. It is not difficult to show that the means defined by \eqref{mean-value-property}, \eqref{legruyer}, \eqref{median} coincide with $\mu_p^r(u)(x)$, in the corresponding cases. Moreover, as the means used before, the variational $p$-mean enjoys the following asymptotic property:
at each point $x\in\Om$ it holds that
\begin{equation}
\label{asymptotic-formula}
\mu^r_p(\phi)(x)=\phi(x)+c_{N,p}\,\De_p^G\phi(x)+o(r^2) \ \mbox{ locally uniformly in $\Om$ as $r\to 0^+$,}
\end{equation}
for every $\phi\in C^2(\Om)$ such that $\na\phi(x)\ne 0$ in $\Om$.
Here,  $c_{N,p}$ is some positive constant only depending on $N$ and $p$ and $\De_p^G$ denotes the \textit{game-theoretic $p$-Laplace operator,} formally defined by
$$
\De_p^G\phi=\De\phi+(p-2)\,\frac{\lan \na^2 \phi\, \na\phi, \na\phi\ran}{|\na\phi|^2}.
$$
 \par
As suggested above, the variational $p$-mean $\mu_p^r(u)$ has natural features that other types of means do not always have. Thanks to three of these, $\mu_p^r(u)$ results in an \textit{average}, according the definition given in \cite{TMP}.  In fact, as an operator on bounded functions, $\mu_p^r$ is: (i) \textit{affine invariant}, i.e. $\mu_p^r(\la\,\phi+c)=\la\,\mu_p^r(\phi)+c$, for $\la>0$ and $c\in\RR$; (ii) \textit{stable}, i.e. $\mu_p^r(\phi)$ is bounded from below and above
by the infimum and supremum of $\phi$, respectively; (iii) \textit{monotone}, i.e.
$\mu_p^r(\phi)\le\mu_p^r(\psi)$, if $\phi\le\psi$ pointwise.
Another important feature of a variational $p$-mean is that the functional $L^p(B_r(x))\ni u\mapsto\mu_p^r(u)\in\RR$ is continuous in the  $L^p(B_r(x))$-topology. These properties are naturally inherited from the variational definition of $\mu_p^r$, which can also be interpreted as the $L^p$-projection of $u$ on the subspace  of $L^p(B_r(x))$ of constant functions.
\par
As shown in \cite{TMP}, being an average is sufficient to prove the convergence of the dynamic programming principle (DPP)
\begin{equation}
\label{dpp}
u^r=\mu_p^r(u^r) \ \mbox{ on } \ \Om,
\end{equation}
subject to some appropriate boundary assumptions of Dirichlet type.
Two types of  effective boundary assumptions can be found in the literature. The former assigns a continuous extension on a strip around $\Om$ of a given Dirichlet data on $\Ga$ (see \cite{MPR3}, \cite{TMP}); the latter only assigns a Dirichlet data on $\Ga$, at the cost of slightly modifying the operator $\mu_p^r$ (see \cite{HR, HR2, RV, CIMW}).
\par
A function $u^r$ that, for a fixed $r>0$, satisfies \eqref{dpp} locally is also said to be \textit{$p$-harmonious} (it is clear that $r$ should be sufficiently small). Under appropriate regularity assumptions on the boundary $\Ga$ of $\Om$, one can prove that $p$-harmonious functions with a given data on $\Ga$ converge as $r\to 0^+$ to a $p$-harmonic function with the appropriate Dirichlet data on $\Ga$. The regularity assumptions differ depending on the boundary condition chosen for the DPP (see \cite{MPR3, HR3, CIMW, TMP}).
\par
The aim of this paper is to show further that the idea of defining an average variationally is very flexible, since it can be adapted to a variety of situations. One way to proceed is to replace the $L^p$-space in \eqref{variational} by other measure spaces, according to convenience. This  was indicated in \cite{IMW} in order to treat solutions of the \textit {game theoretic} parabolic $p$-Laplace equation. There, the Euclidean ball $B_r(x)$ equipped with the Lebesgue measure is replaced by the standard  \textit{heat ball} equipped with a suitable time-varying finite measure. By changing the relevant measure space, extensions of this scheme have been given in \cite{MS, DMRS}, for the Heisenberg group, and in \cite{AKPW} for Carnot groups. (See also \cite{FePi, PaPo, FeFo} for non-variational mean value formulas in Carnot groups.)
\par
In this paper, we begin to investigate on how the variational framework can be extended to vector-valued functions $\phi$ from $\Om\subset\RR^N$ to $\RR^N$. We present our ideas having in mind possible applications to nonlinear systems of partial differential equations.
Here, we begin our investigation by considering an important case study, which entails holomorphic functions of one complex variable.
\par
In order to clarify our plan, let us consider the case in which $N=p=2$. It is easily seen that, if $\phi=(\phi_1,\phi_2)\in C^2(\Om;\RR^2)$ and $\mu^r(\phi)(x)\in\RR^2$ minimizes on $\RR^2$ the function
$$
\RR^2\ni \mu\mapsto \int_{B_r(x)} |\phi(y) -\mu|^2 dy,
$$
then we obtain the asymptotic formula:
$$
\mu^r(\phi)(x)-\phi(x)=\frac18\,\De\phi(x)\,r^2+o(r^2) \ \mbox{ as } \ r\to 0^+.
$$
Here, we mean $\De \phi=(\De\phi_1, \De\phi_2)$.
\par
Thus, we obtain that $\phi$ is a \textit{harmonic map} from $\Om$ to $\RR^2$, i.e. both functions $\phi_1$ and $\phi_2$ are harmonic in $\Om$ if and only if $\mu^r(\phi)(x)-\phi(x)=o(r^2)$ as $r\to 0^+$ for every $x\in\Om$. In other words, the smooth solutions of the system
$$
\De \phi_1=0, \quad \De \phi_2=0 \ \mbox{ in } \ \Om
$$
are characterized by an asymptotic mean value property (as a matter of fact, by even an \textit{exact} mean value property). With a little more effort, the smooth solutions of the system
$$
\De^G_p \phi_1=0, \quad \De^G_p \phi_2=0 \ \mbox{ in } \ \Om,
$$
i.e. the \textit{$p$-harmonic maps},
can be characterized (away from their critical points) by the asymptotic mean value property $\mu_p^r(\phi)(x)-\phi(x)=o(r^2)$ as $r\to 0^+$, where $\mu_p^r(\phi)(x)\in\RR^2$ is the unique minimum point on $\RR^2$ of the function
$$
\RR^2\ni(\mu_1, \mu_2)\mapsto \int_{B_r(x)}\bigl[|\phi_1(y)-\mu_1|^p+|\phi_2(y)-\mu_2|^p\bigr] dy.
$$
\par
In both cases, however, we obtain a characterization of solutions of an \textit{uncoupled system} of differential equations --- a slight generalization of the scalar case.
Instead, the aim of our research is to obtain a non-trivial connection between some sort of mean value property (exact or asymptotic) and the solutions of some coupled system of partial differential equations. With this in mind, as a case study, we shall analyse in this paper the Cauchy-Riemann system --- maybe the most studied \textit{coupled} system of partial differential equations---  and some of its  possible nonlinear generalizations.
\par
Thus, we shall consider complex-valued functions $f=u+i\,v$ of the complex variable $z=x+i\,y$ on subdomains of the complex plane $\CC$. Also, we will denote by $D_r(z)$ the disk in $\CC$ centered at $z$ and with radius $r>0$ and we set $D_r=D_r(0)$, $D=D_1$ and $\cS=\pa D$. We know that sufficiently regular functions $f=u+i\,v$, which satisfy the Cauchy-Riemann system,
$$
v_x=-u_y, \quad v_y=u_x,
$$
or in complex notation
$$
f_{\ol{z}}=0,
$$
are the so-called \textit{analytic} or \textit{holomorphic} functions. (Here, $\ol{z}=x-i y$ is the complex conjugate of $z$, so that $2\,f_{\ol{z}}=f_x+i\,f_y$, $2\,f_z=f_x-i\,f_y$.)  We also know that any holomorphic function is also harmonic, i.e. it satisfies the mean value properties
\begin{equation*}
\label{mean-value-holomorphic}
f(z)=\frac1{|D_r(z)|}\int_{D_r(z)} f(\zi)\,dA_\zi \ \mbox{ and } \ f(z)=\frac1{|\pa D_r(z)|}\int_{\pa D_r(z)} f(\zi)\,dS_\zi,
\end{equation*}
for any disk $D_r(z)$ with $\ol{D_r(z)}\subset\Om$. Here, $dA_\zi$ and $dS_\zi$ denote the respective volume and surface elements. Conversely, there are (complex-valued) harmonic functions which are not holomorphic: their real and imaginary parts are both harmonic, but they do not solve the Cauchy-Riemann system (see also \cite{BaiWu}).
Hence, if we want a characterization of holomorphic functions in terms of a mean value property and, more generally, its extension to nonlinear systems related to the Cauchy-Riemann equations, we must turn to another type of mean. The starting point of our study is the following characterization, which is an adaptation of classical arguments. (We shall postpone its proof to Appendix A.)
\begin{prop}[Mean value characterization of holomorphic functions]
\label{th:new-mean-value}
Let $\Om\subseteq\CC$ be a 
domain and let $f:\Om\to\CC$ be a continuous function.
Then, the function $f$ is of class $C^1(\Om)$ and holomorphic in $\Om$ if and only if
\begin{equation}
\label{new-mean-value}
f(z)=\frac1{|D_r(z)|}\int_{D_r(z)} f(\zi)\left[1+\frac{2}{r}\,(\zi-z)\right] dA_\zi,
\end{equation}
for any disk $D_r(z)$ with $\ol{D_r(z)}\subset\Om$.
\end{prop}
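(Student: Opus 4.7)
The plan is to prove the two directions separately: necessity is immediate from two applications of the classical mean value property for holomorphic functions, while sufficiency is the main content and I would handle it by a mollification argument that reduces everything to a Taylor expansion on a smooth function.

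For the necessity direction, assume $f\in C^1(\Om)$ is holomorphic. Then $f$ itself satisfies the usual disk mean value property, $\int_{D_r(z)} f\,dA_\zi = \pi r^2 f(z)$. Moreover, $\zi\mapsto f(\zi)(\zi-z)$ is a product of two holomorphic functions, hence holomorphic, and it vanishes at $\zi=z$; so by the same mean value property $\int_{D_r(z)} f(\zi)(\zi-z)\,dA_\zi = 0$. Combining these two identities gives \eqref{new-mean-value}.

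For the sufficiency direction, assume $f$ is continuous and \eqref{new-mean-value} holds. Fix a standard real nonnegative mollifier $\rho\in C_c^\infty(D)$ with $\int_\CC\rho\,dA=1$, set $\rho_\ve(w)=\ve^{-2}\rho(w/\ve)$, $\Om_\ve=\{z\in\Om:\dist(z,\Ga)>\ve\}$, and $f_\ve:=\rho_\ve*f\in C^\infty(\Om_\ve)$. For $z\in\Om_{\ve+r}$, a Fubini argument together with the change of variable $\zi'=\zi-w$ in the inner integral yields
\[
\int_{D_r(z)}f_\ve(\zi)\bigl[1+\tfrac{2}{r}(\zi-z)\bigr]dA_\zi = \int_\CC \rho_\ve(w)\int_{D_r(z-w)}f(\zi')\bigl[1+\tfrac{2}{r}(\zi'-(z-w))\bigr]dA_{\zi'}\,dA_w,
\]
and applying the hypothesis \eqref{new-mean-value} to $f$ at each shifted center $z-w$ reduces the right-hand side to $\int_\CC\rho_\ve(w)\,\pi r^2 f(z-w)\,dA_w=\pi r^2 f_\ve(z)$. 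So $f_\ve$ inherits \eqref{new-mean-value} on $\Om_\ve$ for all small $r>0$.

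Since $f_\ve$ is smooth, a Taylor expansion about $z$, combined with the fact that $\int_{D_r(z)}(\zi-z)^a\,\overline{(\zi-z)}^b\,dA_\zi$ vanishes unless $a=b$ (rotational symmetry of the disk), gives
\[
\int_{D_r(z)}f_\ve\,dA_\zi = \pi r^2 f_\ve(z)+\tfrac{\pi r^4}{8}\,\De f_\ve(z)+O(r^6),\qquad \int_{D_r(z)}f_\ve(\zi)(\zi-z)\,dA_\zi = \tfrac{\pi r^4}{2}\,(f_\ve)_{\ol{z}}(z)+O(r^6),
\]
where in the second formula the quadratic Taylor contributions integrate to zero by the odd symmetry of the disk under $\zi-z\mapsto-(\zi-z)$. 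Substituting both expansions into \eqref{new-mean-value} for $f_\ve$, the $\pi r^2 f_\ve(z)$ terms cancel; dividing by $r^3$ and letting $r\to 0^+$ forces $(f_\ve)_{\ol{z}}(z)=0$. Hence $f_\ve$ is holomorphic on $\Om_\ve$. Since $f$ is continuous, $f_\ve\to f$ locally uniformly on $\Om$, and by the Weierstrass convergence theorem the limit $f$ is holomorphic on $\Om$; in particular $f\in C^\infty(\Om)\subset C^1(\Om)$.

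The crux of the argument is the algebraic observation underlying the third step: the weight $1+(2/r)(\zi-z)$ is engineered precisely so that, after the $O(r^4)$ vanishing of quadratic Taylor terms by symmetry, the lowest-order nontrivial correction to the ordinary disk average is $\pi r^3 (f_\ve)_{\ol{z}}(z)$. This is what makes the identity detect the Cauchy-Riemann condition $\pa_{\ol{z}}f=0$ rather than merely harmonicity, and it is the only nonroutine ingredient; the mollification and limit-passing steps are standard.
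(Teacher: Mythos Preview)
Your proof is correct. The necessity direction matches the paper's: both use that $\zi\mapsto f(\zi)(\zi-z)$ is holomorphic with value $0$ at $z$, hence has vanishing disk mean, while $f$ itself satisfies the ordinary mean value property.

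For sufficiency you take a genuinely different route. The paper first proves a regularity lemma: writing \eqref{new-mean-value} as a convolution $f=f*j_r$ with an explicit bounded kernel $j_r$, it shows directly that $f$ is Lipschitz on $\Om_r$, hence a.e.\ differentiable, and then that its partial derivatives inherit the same convolution identity, bootstrapping to $f\in C^\infty(\Om_{2r})$. Only then does it Taylor-expand $f$ itself to extract $f_{\ol z}=0$. You instead mollify, transfer \eqref{new-mean-value} to $f_\ve$ by the translation-invariance of the weight (your Fubini/change-of-variable step), Taylor-expand the smooth $f_\ve$ to get $(f_\ve)_{\ol z}=0$, and finish with the Weierstrass convergence theorem. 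Your argument is shorter and avoids the bespoke regularity bootstrap, at the cost of invoking Weierstrass; the paper's approach yields the smoothness of $f$ as an intermediate result and stays more self-contained. Both are perfectly valid.
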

\par
In Section \ref{sec:mean-value-property} we reformulate this result in a suitable way, and starting from Remark \ref{rem:projection}, we then build up our investigation on (nonlinear) asymptotic mean value properties for complex-valued solutions of certain nonlinear systems, which have to do with the classical Cauchy-Riemann equations. In fact, in Section \ref{sec:nonlinear-characterizations},
we shall consider the following general situation.
\par
Let $F:[0,\infty)\to[0,\infty)$ be a function of class $C^1([0,\infty))\cap C^2((0,\infty))$, which is strictly convex and such that $F(0)=F'(0)=0$
and $F(s)\to+\infty$ as $s\to+\infty$.
Let $\Om\subset\CC$ be an open domain and take any closed disk $\ol{D_r(z)}\subset\Om$. Next, for $f\in L_\loc^1(\Om; \CC)$ and any $c\in\CC$ define:
\begin{equation}
\label{function-c}
\cF(c)=\int_{\pa D_r(z)} F\left(\left|f(\zi)-c\,\ol{(\zi-z)}\right|\right) dS_\zi.
\end{equation}
Under these assumptions, it is easy to see that $\cF$ has exactly one minimum point $c^\cF(f,r)(z)$ on $\CC$. With these premises, we present our main result: a characterization of solutions of a nonlinear system of Cauchy-Riemann type.

\begin{thm}
\label{th:asymptotics-F}
Let $F\in C^1([0,+\infty))\cap C^2((0,\infty))$ be a strictly convex function such that $F(0)=F'(0)=0$.
\par
Set
\begin{equation}
\label{Lambda}
\La(s)=\frac{s\,F''(s)}{F'(s)} \ \mbox{ for } \ s>0
\end{equation}
and assume that, for some constants $0<\La^-\le\La^+$, it holds that
\begin{equation}
\label{Lambda-bounds}
\La^-\le\La(s)\le\La^+ \ \mbox{ if } \ s>0.
\end{equation}
\par
Let $\Om\subseteq\CC$ be an open set and let the function $f:\Om\to\CC$ be differentiable in $\Om$. Let $c^\cF(f,r)(z)$ be the minimum point of $\cF$ on $\CC$.
Then, away from the zeroes of $f$, we have that
$$
c^\cF(f,r)=o(1) \ \mbox{ as } \ r\to 0^+
$$
if and only if
$f$ satisfies the equation
\begin{equation}
\label{general-eq-f}
f_{\ol{z}}+\frac{\La(|f|)-1}{\La(|f|)+1}\,\frac{f}{\ol{f}}\,\ol{f}_{\ol{z}}=0 \ \mbox{ in } \ \Om.
\end{equation}
\end{thm}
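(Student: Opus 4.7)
The plan is to derive the Euler--Lagrange equation for the unique minimizer of $\cF$ and analyse its leading behaviour as $r\to 0^+$.

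Strict convexity of $\cF$ follows from strict convexity of $F$ combined with $F'(0)=0$, which makes $F$ strictly increasing on $(0,\infty)$ and hence $F(|\,\cdot\,|)$ strictly convex on $\CC$ (by the strict triangle inequality together with strict monotonicity); the property is inherited by the affine substitution $c\mapsto f(\zeta)-c\,\ol{(\zeta-z)}$ and survives integration over $\pa D_r(z)$. Hence $c^\cF(f,r)(z)$ is the unique critical point. Differentiating $\cF$ with respect to $\ol c$ in the Wirtinger sense and parametrizing $\zeta=z+re^{i\theta}$ produces
$$
\int_0^{2\pi} F'(|w|)\,\frac{w}{|w|}\,e^{i\theta}\,d\theta = 0, \qquad w(\theta) := f(z+re^{i\theta}) - c\,r\,e^{-i\theta}.
$$

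At a point $z$ where $f(z)\ne 0$, I write $w = f(z)(1+\epsilon)$ with $\epsilon = r(A e^{i\theta}+B e^{-i\theta})+o(r)$, $A = f_z(z)/f(z)$, $B = (f_{\ol z}(z)-c)/f(z)$, using differentiability of $f$ at $z$. Elementary expansions give $|1+\epsilon| = 1+\re\epsilon+O(|\epsilon|^2)$ and $(1+\epsilon)/|1+\epsilon| = 1+i\,\im\epsilon+O(|\epsilon|^2)$, while the definition \eqref{Lambda} enters precisely through the chain-rule expansion $F'(|f|(1+\re\epsilon)) = F'(|f|)[1+\La(|f|)\re\epsilon]+O(|\epsilon|^2)$. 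Putting these together,
$$
F'(|w|)\,\frac{w}{|w|} = F'(|f|)\,\frac{f}{|f|}\,\bigl[\,1+\La(|f|)\,\re\epsilon + i\,\im\epsilon\,\bigr] + O(|\epsilon|^2).
$$
After multiplying by $e^{i\theta}$ and integrating, Fourier orthogonality kills every mode except the $e^{-i\theta}$ coefficient in the bracket; a short calculation then collapses the Euler--Lagrange equation, after dividing by the non-vanishing factor $\pi r F'(|f|)(f/|f|)$, to
$$
(\La(|f|)+1)\,\frac{f_{\ol z}(z)-c}{f(z)}+(\La(|f|)-1)\,\frac{\ol f_{\ol z}(z)}{\ol f(z)} = o(1)\quad\text{as }r\to 0^+,
$$
where I have used $\ol{f_z}=\ol f_{\ol z}$.

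Solving this linear identity in $c$ yields the candidate limit
$c_0(z) = f_{\ol z}(z) + \tfrac{\La(|f|)-1}{\La(|f|)+1}\,\tfrac{f(z)}{\ol f(z)}\,\ol f_{\ol z}(z),$
which vanishes precisely when \eqref{general-eq-f} holds at $z$; so formally $c^\cF(f,r)(z)\to 0$ if and only if $c_0(z)=0$. To legitimise this passage to the limit rigorously, I would apply the implicit function theorem to the renormalized E--L map $G(c,r) := r^{-1}\int_0^{2\pi} F'(|w|)(w/|w|)\,e^{i\theta}\,d\theta$ at $(c_0,0)$: its $c$-derivative there is the non-zero constant $-\pi(\La+1)/f$, so a unique $C^1$ branch $c(r)\to c_0$ exists, and strict convexity of $\cF$ identifies it with $c^\cF(f,r)$. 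The main obstacle is precisely this upgrade, namely showing the $O(|\epsilon|^2)$ remainder in the expansion of $F'(|w|)w/|w|$ is uniform for $c$ in a neighbourhood of $c_0$ and that the minimizer does not escape as $r\to 0^+$. Here the two-sided bound \eqref{Lambda-bounds} is essential: it prevents $F'$ from collapsing to $0$ or blowing up near $|f(z)|$, which is what furnishes the uniform remainder estimates and the strong-convexity bound on $\cF$ that confines the minimizer.
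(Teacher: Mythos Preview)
Your formal expansion is correct and identifies the right limit $c_0(z)$: the Fourier computation and the appearance of $\Lambda$ through the chain rule are exactly what is happening. The difficulty is entirely in the step you flag as ``the main obstacle,'' and your proposed fix does not close it.

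The implicit function theorem route is not available under the stated hypotheses. You need $G(c,r)$ to be jointly $C^1$ near $(c_0,0)$, but $f$ is only assumed differentiable at $z$, so the $o(r)$ remainder in $f(z+re^{i\theta})$ need not be differentiable in $r$; and $G(c,0)$ is defined only as a limit of a $0/0$ expression. Your fallback, a ``strong-convexity bound on $\cF$ that confines the minimizer,'' does not work either: the naive coercivity argument only gives $|c_r|\lesssim 1/r$, and the Hessian of $F(|\cdot|)$ degenerates away from $|f(z)|$ (think $F(s)=s^p/p$, $p>2$), so you cannot bound $c_r$ without already knowing $w$ stays near $f(z)$ --- which is circular. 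The real issue is that your expansion in $\epsilon$ presupposes $|\epsilon|=O(r)$, i.e.\ $c$ bounded, and nothing in the proposal establishes this.

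The paper obtains the a priori bound by a different device. Instead of Taylor expanding, it writes the Euler--Lagrange identity as
\[
\int_{\cS}\bigl[H_{\ol w}(w)-H_{\ol w}(\omega)\bigr]\zeta\,dS_\zeta=0,\qquad H(w)=F(|w|),\ \omega=f(z),
\]
and applies the fundamental theorem of calculus to the integrand along the segment from $\omega$ to $w$. This produces an \emph{exact} equation of the form
\[
c_r=\tau+\alpha(r)\,\ol\sigma+\beta(r)\,\sigma+\gamma(r)\,(\ol\tau-\ol{c_r}),
\]
with explicit integral coefficients involving $\Lambda(|w|)$ along the segment. The two-sided bound \eqref{Lambda-bounds} then gives $|\alpha(r)|,|\gamma(r)|\le\max\bigl[\frac{\Lambda^+-1}{\Lambda^++1},\frac{1-\Lambda^-}{1+\Lambda^-}\bigr]<1$ uniformly in $r$, and this contraction-type estimate forces $|c_r|\le C(|\sigma|+|\tau|)$ for all $r>0$. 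Once boundedness is in hand, one passes to the limit by dominated convergence (not IFT): every subsequential limit of $c_r$ must solve the limiting linear equation, whose unique solution is $c_0$. The paper first carries this out for the affine approximation $f_a$ and then shows $|c^\cF(f,r)-c^\cF(f_a,r)|\to 0$ by the same FTC trick; this two-step decomposition is what absorbs the mere-differentiability assumption on $f$ cleanly.
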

The proof of this result will be given in Section \ref{subsec:variational-means}. The assumptions on $F$ are quite general and include the case of the so-called $p$-means for $p>1$, in which $F(s)=s^p/p$. In Corollary \ref{th:asymptotics-infty}, we will also carry out the case $p=\infty$, in which the integral in \eqref{function-c} is replaced by the supremum of $|f(\zi)-c\,\ol{(\zi-z)}|$ for $\zi\in \partial D_r(z)$.
\par
As a by-product of Theorem \ref{th:asymptotics-F}, we obtain the following nonlinear asymptotic characterization of holomorphic functions.

\begin{thm}
\label{th:nonlinear-holomorphic}
Let $F: [0,\infty)\to [0,\infty)$ satisfy the assumptions of Theorem \ref{th:asymptotics-F} and denote by $G: [0,\infty)\to [0,\infty)$ the Young conjugate of $F$.
\par
Suppose that $g$ is differentiable in $\Om$ and let $c^\cJ(g,r)(z)$ be the minimum point in $\CC$ of the function:
\begin{equation}
\label{defJ}
\cJ(c)= \int_{\pa D_r(z)} F\left(\left|\frac{G'(|g(\zi)|)}{|g(\zi)|}g(\zi)-c\,\ol{(\zi-z)}\right|\right) dS_\zi, \ \ c\in\CC.
\end{equation}
Then, away from the zeroes of $g$, we have that $g$ is holomorphic in $\Om$ if and only if
$c^\cJ(g,r)(z)=o(1)$ as $r\to 0^+$.
\end{thm}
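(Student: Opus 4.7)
The plan is to reduce Theorem~\ref{th:nonlinear-holomorphic} directly to Theorem~\ref{th:asymptotics-F} by a pointwise change of unknown. Away from the zeros of $g$, I set
$$
f(\zi)=\frac{G'(|g(\zi)|)}{|g(\zi)|}\,g(\zi),
$$
so that the expression inside $F$ in \eqref{defJ} is exactly $|f(\zi)-c\,\ol{(\zi-z)}|$. Hence the functional $\cJ$ coincides with the functional $\cF$ in \eqref{function-c} built from this $f$, giving $c^\cJ(g,r)=c^\cF(f,r)$. Theorem~\ref{th:asymptotics-F} then identifies the asymptotic condition $c^\cF(f,r)=o(1)$ with the nonlinear equation \eqref{general-eq-f} for $f$, and the whole problem reduces to checking that, under the above substitution, \eqref{general-eq-f} is equivalent to $g_{\ol z}=0$.

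The key algebraic input is Young conjugacy: from $F'(G'(t))=t$ one gets that $G'$ is the inverse of $F'$ and, by differentiation, $F''(G'(t))\,G''(t)=1$. Writing $\rho=|g|$ and $h(\rho)=G'(\rho)/\rho$, this gives $|f|=G'(\rho)$ and
$$
\La(|f|)=\frac{G'(\rho)\,F''(G'(\rho))}{F'(G'(\rho))}=\frac{h(\rho)}{G''(\rho)},
$$
so that
$$
\frac{\La(|f|)-1}{\La(|f|)+1}=\frac{h(\rho)-G''(\rho)}{h(\rho)+G''(\rho)},
$$
while $f/\ol f=g/\ol g$, since $h$ is real and positive.

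Next, I would differentiate $f=h(\rho)\,g$ and $\ol f=h(\rho)\,\ol g$ with respect to $\ol z$, using
$$
\rho_{\ol z}=\frac{\ol g\, g_{\ol z}+g\,\ol g_{\ol z}}{2\rho},\qquad h'(\rho)=\frac{G''(\rho)-h(\rho)}{\rho},
$$
and then substitute the resulting expressions into \eqref{general-eq-f}. Exploiting the identity $g\,\ol g=\rho^2$ to convert terms of the form $g^2\,\ol g_{\ol z}/\rho^2$ into $(g/\ol g)\,\ol g_{\ol z}$ and $\ol g^2\,g_{\ol z}/\rho^2$ into $(\ol g/g)\,g_{\ol z}$, I expect the coefficient of $(g/\ol g)\,\ol g_{\ol z}$ to vanish identically, and the coefficient of $g_{\ol z}$ to simplify to $2\,h(\rho)\,G''(\rho)/(h(\rho)+G''(\rho))$, which is strictly positive by strict convexity of $G$ on $(0,\infty)$. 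Hence \eqref{general-eq-f} for $f$ collapses to $g_{\ol z}=0$, as desired.

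The main obstacle is precisely this last telescoping of $\ol z$-derivatives: the overall strategy is transparent once one recognises $f=(G'(|g|)/|g|)\,g$ as the natural dualising map that sends the nonlinear system \eqref{general-eq-f} to the classical Cauchy-Riemann equation, but the cancellations must be executed carefully and depend crucially on the specific combination $(\La-1)/(\La+1)$ appearing in \eqref{general-eq-f}, together with the Young-conjugate identity $F''(G'(t))\,G''(t)=1$.
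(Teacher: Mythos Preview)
Your proposal is correct and follows essentially the same route as the paper: define $f=G'(|g|)\,g/|g|$, identify $c^\cJ(g,r)$ with $c^\cF(f,r)$, invoke Theorem~\ref{th:asymptotics-F}, and then use the Young-conjugate identities to check that \eqref{general-eq-f} for $f$ collapses to $g_{\ol z}=0$. The paper packages the same computation as the explicit asymptotic formula
\[
c^\cJ(g,r)(z)=\frac{2\,G'(|g|)\,G''(|g|)}{G'(|g|)+|g|\,G''(|g|)}\,g_{\ol z}(z)+o(1),
\]
which coincides with your coefficient $2h(\rho)G''(\rho)/(h(\rho)+G''(\rho))$ once you clear the factor $\rho$ from $h(\rho)=G'(\rho)/\rho$.
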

The definition and properties of the Young conjugate $G$ will be recalled in Section \ref{subsec:nonlinear-holomorphic}, together with the proof of the last theorem.
\par
The last issue that we address in this paper is an attempt to develop approximation algorithms for solutions of nonlinear systems of partial differential equations by solutions of certain vectorial dynamic programming principles (DPP) (see Section \ref{sec:classical-AMVP}). As already mentioned, such algorithms have been constucted in the scalar case, by solving the DPP  \eqref{dpp} --- based on some nonlinear mean value properties --- for a fixed radius $r$ and then by letting $r$ tend to zero. In the limiting process, the theory of viscosity solutions and the concept of asymptotic mean value property in the viscosity sense play a central role.
\par
The aim of Sections \ref{sec:classical-AMVP} and \ref{subsec:contact-amvp} is to extend this scheme to the vectorial case. In order to do this, we must by-pass at least two obstructions.
The first one has to do with setting up a suitable vectorial DPP, which has a fixed-point structure as in \eqref{dpp}, based on the characterizations we carried out in Section \ref{sec:nonlinear-characterizations}. However,
those characterizations do not produce the desired DPP structure.
In fact, for instance, if we were to use the weighted mean $c^\cF(f,r)$, the approximating equation for fixed $r$ should be $c^\cF(f,r)=0$.

Therefore, in Section \ref{sec:classical-AMVP} we modify the means $c^\cF(f,r)$ in the spirit of Proposition \ref{th:new-mean-value}, Corollary \ref{cor:new-mean-value}, and Remark \ref{rem:projection}. These results suggest that a DPP structure can be set up, at least in the quadratic case. In fact, with Remark \ref{rem:projection} in mind, we consider the function defined by
\begin{equation*}
\cG(a,b)=\int_{\pa D_r(z)} \left[F\bigl(|f(\zi)-a|\bigr)+ F\bigl(|f(\zi)-b\,\ol{(\zi-z)}|\bigr)\right] dS_\zi,
\end{equation*}
for $a, b\in\CC$. We then denote the unique minimum point of $\cG$ on $\CC\times\CC$ by $(a^\cG(f,r), b^\cG(f,r))$ and set
\begin{equation}
\label{def-mu}
\mu^\cG(f,r)(z)=a^\cG(f,r)(z)+r\,b^\cG(f,r)(z).
\end{equation}
The following theorem provides the basis to construct an appropriate DPP. In analogy with what done in \cite{IMW}, \cite{CIMW}, we say that a differentiable function $f:\Om\to\CC$ satisfies the \textit{asymptotic mean value property (AMVP)} in $\Om$ if
\begin{equation*}
\label{asymptotics-G}
\mu^\cG(f,r)(z)-f(z)= o(r) \ \mbox{ as } \ r\to 0^+,
\end{equation*}
for any $z\in\Om$.
\begin{thm}[Characterization by AMVP]
\label{th:dpp-theorem}
Let $\Om$ be a domain in $\CC$ and let $f$ be differentiable in $\Om$. Let $\mu^\cG(f,r)(z)$ be defined as in \eqref{def-mu}.
Then,
away from the zeroes of $f$ in $\Om$, $f$ satisfies the equation \eqref{general-eq-f}
if and only if it satisfies the asymptotic mean value property  in $\Om$.
\end{thm}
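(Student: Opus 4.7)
The plan is to exploit the fact that the minimization problem defining $\mu^\cG(f,r)(z)$ completely decouples in the two variables $a$ and $b$, so that the analysis splits into two independent parts that only recombine in the final asymptotics. Indeed, $\cG(a,b)$ is a sum of two integrals, one depending only on $a$ and the other only on $b$; by strict convexity of each summand, the unique minimizer $(a^\cG(f,r),b^\cG(f,r))$ of $\cG$ on $\CC\times\CC$ is obtained by minimizing each summand separately. Comparing with \eqref{function-c}, the $b$-summand is exactly $\cF(b)$, so by the uniqueness of the minimizer we have $b^\cG(f,r)(z)=c^\cF(f,r)(z)$.

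The first substantive step is to show that the variational $F$-mean on the circle, namely
$$
a^\cG(f,r)(z)=\arg\min_{a\in\CC}\int_{\pa D_r(z)}F(|f(\zi)-a|)\,dS_\zi,
$$
satisfies $a^\cG(f,r)(z)-f(z)=o(r)$ as $r\to 0^+$ whenever $f$ is differentiable at $z$. To do this, I would parametrize $\zi=z+re^{i\te}$ and use the first-order expansion
$$
f(\zi)-f(z)=r\,P(\te)+o(r),\qquad P(\te)=f_z(z)\,e^{i\te}+f_{\ol z}(z)\,e^{-i\te},
$$
whose key symmetry is the antipodal antisymmetry $P(\te+\pi)=-P(\te)$. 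Writing $a^\cG=f(z)+r A_r$ and substituting in the Euler--Lagrange identity for $a^\cG$, then dividing by $F'(r)$ and using \eqref{Lambda-bounds} to control ratios of the form $F'(r\,s)/F'(r)$ uniformly in $s$ on compact sets away from $0$, I would extract a cluster point $A_*$ of $A_r$ and pass to the limit to obtain the first-order optimality condition for $A\mapsto \int_0^{2\pi}F(|P(\te)-A|)\,d\te$. The antipodal symmetry of $P$ makes $A_*=0$ a critical point, and strict convexity forces uniqueness; hence $A_r\to 0$, i.e. $a^\cG(f,r)(z)=f(z)+o(r)$.

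The two pieces now recombine cleanly: from \eqref{def-mu} and $b^\cG=c^\cF$,
$$
\mu^\cG(f,r)(z)-f(z)=\bigl[a^\cG(f,r)(z)-f(z)\bigr]+r\,c^\cF(f,r)(z)=o(r)+r\,c^\cF(f,r)(z).
$$
Therefore, away from the zeroes of $f$, the AMVP $\mu^\cG(f,r)-f=o(r)$ holds if and only if $c^\cF(f,r)=o(1)$ as $r\to 0^+$, which by Theorem \ref{th:asymptotics-F} is in turn equivalent to the equation \eqref{general-eq-f}. The main technical obstacle I anticipate is the rigorous rescaling argument for $a^\cG-f=o(r)$: since $F$ is not assumed homogeneous, the Euler--Lagrange equation does not scale exactly, and one must use the two-sided bounds on $\La$ both to justify the division by $F'(r)$ and to ensure compactness of $\{A_r\}$; some care is also needed at the angles where $P(\te)-A_r$ may vanish, where however the integrand remains continuous thanks to $F\in C^1([0,\infty))$ with $F'(0)=0$.
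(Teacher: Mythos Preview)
Your overall architecture coincides with the paper's: the functional $\cG$ decouples, so $b^\cG(f,r)=c^\cF(f,r)$, and once $a^\cG(f,r)(z)-f(z)=o(r)$ is established the conclusion follows at once from Theorem~\ref{th:asymptotics-F}. The paper records exactly this decomposition as Lemma~\ref{lem:ab-theorem} (formula~\eqref{abeq}) and then declares Theorem~\ref{th:dpp-theorem} a straightforward corollary.

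There is, however, a concrete mis-step in your handling of the $a^\cG$ part. After writing $a^\cG=f(z)+rA_r$ and dividing the Euler--Lagrange identity by $F'(r)$, the integrand carries the ratios $F'(r\,|P(\te)-A_r|)/F'(r)$, and these do \emph{not} tend to $F'(|P(\te)-A_*|)$ as $r\to 0^+$: that would require $F'$ to be multiplicative, i.e.\ a pure power. The bounds~\eqref{Lambda-bounds} only yield $s^{\La^-}\le F'(rs)/F'(r)\le s^{\La^+}$ for $s\ge 1$ (and the reversed inequalities for $0<s\le 1$), so along subsequences the ratio converges to some increasing weight $g$, and the true limiting equation reads $\int_0^{2\pi} g(|P(\te)-A_*|)\,\dfrac{P(\te)-A_*}{|P(\te)-A_*|}\,d\te=0$, which is \emph{not} the first-order optimality condition for $A\mapsto\int_0^{2\pi}F(|P(\te)-A|)\,d\te$. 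Your antipodal symmetry $P(\te+\pi)=-P(\te)$ together with the strict monotonicity of $g$ still forces $A_*=0$, so the route can be repaired, but the limit has to be identified correctly. The paper sidesteps this entirely by adding, in Lemma~\ref{lem:ab-theorem}, the extra hypothesis $F'(s)=C\,s^\al+o(s^\al)$ as $s\to 0^+$ (tacitly assumed though not recorded in the statement of Theorem~\ref{th:dpp-theorem}); under it $F'(rs)/F'(r)\to s^\al$, the limiting problem is the minimization of $\int_0^{2\pi}|P(\te)-A|^{\al+1}\,d\te$, and the same symmetry argument applies cleanly.
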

In Remark \ref{linkSec2} we explicitly link Proposition \ref{th:new-mean-value} to Theorem \ref{th:dpp-theorem}.
\par
As the AMVP suggests, it is clear that the desired vectorial DDP is
\begin{equation*}
f^r=\mu^\cG(f^r,r).
\end{equation*}
The problem of solving it with suitable boundary conditions will be considered in a forthcoming paper.
\par
Instead, we turn to the second obstruction. This has to do with the concept of solution $f$ of the system \eqref{general-eq-f} that one should use in order to carry out its approximation by the solutions $f^r$ of the DPPs for $r>0$.
Theorem \ref{th:dpp-theorem} (see Lemma \ref{lem:ab-theorem}, as well) suggests that at the zeroes of $f$ the concept of classical solution has to be given up and a solution of the system \eqref{general-eq-f} must be intended in some suitable generalized sense. The standard weak sense is hardly viable due to the fact that \eqref{general-eq-f} (and generally equations obtained as limits in a DPP approximation process) is not (genuinely) variational. The viscosity sense works for scalar elliptic and parabolic equations and is based on comparison principles. This feature prevents a straightforward extension to systems, though.
\par
Thus, in Section \ref{subsec:contact-amvp}, we attempt to adapt to our case study the theory of \textit{contact solutions}, which N. Katzourakis has proposed in \cite{Ka}, very recently. This tries to extend the theory of viscosity solutions for partial differential equations to the case of systems. The new theory may
appear quite intricate, having to do with tensor calculus. Nevertheless, it succeeds to effectively generalize several important features of that of viscosity solutions. The most significant are the definition of solutions by jets or by touching test functions and a stabilty theorem. In Section \ref{subsec:gen-contact-solutions}, we shall recall the details of the theory which are pertinent to our investigation.
In order to perform our adaptation, in Section \ref{subsec:CAMVP}, we introduce the idea of \textit{contact asymptotic mean value property (CAMVP)}, which is aimed to generalise the AMVP for viscosity solutions, introduced in \cite{MPR1}. Thus, with reference to the definitions contained in  Section \ref{subsec:contact-amvp}, we present our last characterization, which is in the spirit of those presented in \cite{MPR1} and \cite{IMW} for scalar functions.
\begin{thm}[Characterization by CAMVP]
\label{th:contact-asymptotic-characterization}
Let $\Om$ be a domain in $\CC$ and let  $f:\Om\to\CC$ be a continuos function in $\Om$.
Then,
$f$ is a contact solution of the system \eqref{general-eq-f}
if and only if it satisfies the contact asymptotic mean value property in $\Om$.
\end{thm}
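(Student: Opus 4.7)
The plan is to reduce the contact-type characterization to Theorem \ref{th:dpp-theorem} by transferring all the analytic work from $f$ onto the smooth contact test maps, which are automatically differentiable and therefore fall under the hypotheses of the earlier AMVP characterization. This mirrors the scalar viscosity strategy of \cite{MPR1, IMW}: one never differentiates $f$ itself; instead, one uses that any contact test map $\Phi$ must, at the contact point, simultaneously satisfy the pointwise equation (by the definition of contact solution) and the pointwise asymptotic mean value formula (by Theorem \ref{th:dpp-theorem} applied to $\Phi$). Theorem \ref{th:dpp-theorem} is the bridge between the two sides of the equivalence.

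For the implication $(\Rightarrow)$, I would fix $z_0\in\Om$ and let $\Phi$ be a smooth test map that contact-touches $f$ at $z_0$ in the sense of Section \ref{subsec:contact-amvp}. By the definition of contact solution from Section \ref{subsec:gen-contact-solutions}, $\Phi$ must satisfy equation \eqref{general-eq-f} at $z_0$. Since $\Phi$ is differentiable, Theorem \ref{th:dpp-theorem} applies to $\Phi$ in a neighborhood of $z_0$ and yields
$$
\mu^\cG(\Phi,r)(z_0)-\Phi(z_0)=o(r) \ \mbox{ as } \ r\to 0^+.
$$
It then remains to observe that this asymptotic, being required for every admissible contact test map at $z_0$, is exactly the CAMVP for $f$ at $z_0$. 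The verification is a direct unpacking of the definitions, once one checks that the weighted projection $\mu^\cG$ depends only on the contact equivalence class of the relevant jet of $\Phi$ at $z_0$ up to a remainder of order $o(r)$.

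For the converse $(\Leftarrow)$, I would run the same argument in reverse. Assume $f$ satisfies the CAMVP and let $\Phi$ be any smooth test map contact-touching $f$ at $z_0$. Specializing the CAMVP to $\Phi$ gives $\mu^\cG(\Phi,r)(z_0)-\Phi(z_0)=o(r)$. Applying Theorem \ref{th:dpp-theorem} to $\Phi$ --- now in the reverse direction, AMVP $\Rightarrow$ equation --- we conclude that $\Phi$ solves \eqref{general-eq-f} at $z_0$. Since this holds for every admissible contact test map, $f$ is a contact solution of \eqref{general-eq-f} by definition.

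The main obstacle will be the treatment of the set where the relevant map vanishes, which is precisely the locus excluded from Theorem \ref{th:dpp-theorem}: there the coefficient $\frac{f}{\ol{f}}$ in \eqref{general-eq-f} is singular and the Euler--Lagrange equations for the minimizer of $\cG$ degenerate. The strategy is analogous to the viscosity treatment of singular elliptic operators in \cite{MPR1, IMW}: restrict the admissible test maps at such points to those whose jet is non-degenerate, or read the condition as vacuous, in a way compatible with Katzourakis's contact jet framework. A secondary technical point is the tensorial/jet algebra linking contact jets to the vectorial functional $\cG$; one has to verify that the $o(r)$ remainder produced by Theorem \ref{th:dpp-theorem} is uniform across the contact equivalence class of $\Phi$ at $z_0$, which amounts to a compatibility check between the second-order Taylor data of $\Phi$ and the variational projection defining $\mu^\cG$, exactly in the spirit of Remark \ref{rem:projection} and Remark \ref{linkSec2}.
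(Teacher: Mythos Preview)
Your overall strategy --- transfer the analysis from $f$ to a differentiable test object and invoke the asymptotic expansion proved earlier --- is exactly what the paper does. The paper's proof is in fact a one-line application of Lemma~\ref{lem:asymptotics-F} to the \emph{affine} test function $f_a(\zi)=f(z)+\si(\zi-z)+\tau\,\ol{(\zi-z)}$, which is the primary form in which the CAMVP is stated in Section~\ref{subsec:CAMVP}.

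There is, however, a genuine gap in your write-up. Both the contact-solution condition and the CAMVP are \emph{one-sided $\xi$-projected inequalities}, not equalities: the former reads $\re\bigl[\ol{\xi}\,\FF(f(z);\si,\tau)\bigr]\ge 0$, the latter $\re\bigl\{\ol{\xi}\,[\mu^\cG(f_a,r)(z)-f(z)]\bigr\}\ge o(r)$. Your argument, as written, asserts that a contact test map $\Phi$ ``must satisfy equation~\eqref{general-eq-f} at $z_0$'' and then feeds this into Theorem~\ref{th:dpp-theorem} to obtain $\mu^\cG(\Phi,r)(z_0)-\Phi(z_0)=o(r)$. Neither step is correct: the contact condition does \emph{not} say $\FF(f(z_0);\Phi_z,\Phi_{\ol z})=0$, and Theorem~\ref{th:dpp-theorem} is the equivalence ``equation holds $\Longleftrightarrow$ $=o(r)$'', which is not what you need. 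The contact jet $J^{1,\xi}_\CC f(z)$ depends on $\xi$, so you cannot combine the inequalities for $\xi$ and $-\xi$ to force equality.

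The fix is to replace Theorem~\ref{th:dpp-theorem} by the underlying pointwise asymptotic formula, namely Lemma~\ref{lem:ab-theorem} (or, for the affine test $f_a$, directly Lemma~\ref{lem:asymptotics-F} together with \eqref{asymptotics-a}). These give, at any $z$ with $f(z)\ne 0$,
\[
\mu^\cG(f_a,r)(z)-f(z)=r\,\FF\bigl(f(z);\si,\tau\bigr)+o(r).
\]
Taking $\re[\ol{\xi}\,\cdot\,]$ of both sides shows at once that the CAMVP inequality and the contact-solution inequality are equivalent, for each fixed $z,\xi,(\si,\tau)$. This is precisely the paper's argument. Your concerns about the zero set of $f$ and about uniformity of the remainder are moot here: the modified definitions in Section~\ref{subsec:CAMVP} restrict both notions to points where $f(z)\ne 0$, and the test object $f_a$ is already affine, so no uniformity across a jet class is needed.
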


Applications of the ideas presented in this paper are various. Mainly, they are attempts of extensions to systems of known results and techniques for scalar equations.  One already mentioned is the estension of DPPs to vectorial cases. Also, the approximation by means of solutions of suitable DPPs of solutions of systems of PDEs may help to obtain information about the regularity of the latter, which is a current field of intensive study (see e.g. \cite{LP, LPS} in the scalar case). Further, we notice that the relevant DPPs mentioned in this paper may be interpreted as some kind of short-range non-local equations.

\section{A mean value property for holomorphic functions}
\label{sec:mean-value-property}

In this section, we briefly explore on the connection between the mean value property or its asymptotic counterpart and the minimization of a suitable functional. This is the starting point of the analysis carried out in the remaining sections, in which we extend that connection to some nonlinear settings, thus giving a link among Proposition 1.1 and Theorems 1.2 and 1.4, whose proofs are presented in Sections \ref{sec:nonlinear-characterizations} and \ref{sec:classical-AMVP}.
\par
We start with a direct consequence of Proposition \ref{th:new-mean-value}.
\begin{cor}
\label{cor:new-mean-value}
Let $\Om\subseteq\CC$ be a domain and let $f:\Om\to\CC$ be a continuous function.
Then, the following assertions are equivalent:
\begin{enumerate}[(i)]
\item
the mean value formula \eqref{new-mean-value} holds
for any disk $D_r(z)$ with $\ol{D_r(z)}\subset\Om$;
\item
$f\in C^1(\Om)$ and the following asymptotic mean value formula formula holds at every $z\in\Om$:
\begin{equation*}
f(z)=\frac1{|D_r(z)|}\int_{D_r(z)} f(\zi)\left[1+\frac{2}{r}\,(\zi-z)\right] dA_\zi+o(r) \ \mbox{ as } \ r\to 0^+;
\end{equation*}
\item
$f$ is holomorphic in $\Om$.
\end{enumerate}
\end{cor}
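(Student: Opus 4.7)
The statement packages three equivalent conditions, two of which, (i) and (iii), are already paired by Proposition \ref{th:new-mean-value}, since every holomorphic function on an open set of $\CC$ is automatically of class $C^1$. Thus the equivalence (i)$\Leftrightarrow$(iii) follows directly, with no additional work. Similarly, (i)$\Rightarrow$(ii) is immediate: if the exact identity \eqref{new-mean-value} holds, then the asymptotic identity holds with vanishing remainder, and again Proposition \ref{th:new-mean-value} supplies the $C^1$ regularity. The real content of the corollary lies in the implication (ii)$\Rightarrow$(iii), and this is where I would focus the effort.

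The plan for (ii)$\Rightarrow$(iii) is to use the assumed $C^1$ regularity to Taylor expand $f$ around an arbitrary point $z\in\Om$ in the Wirtinger form
$$f(z+w)=f(z)+f_z(z)\,w+f_{\ol{z}}(z)\,\ol{w}+\eta(w),$$
where $|\eta(w)|/|w|\to 0$ as $|w|\to 0^+$, uniformly for $z$ in compact subsets of $\Om$. Inserting this expansion into the weighted average in (ii) after the change of variable $\zi=z+w$, I would exploit the rotational symmetry of the disk to annihilate the mixed moments:
$$\int_{D_r} w\,dA_w=\int_{D_r}\ol{w}\,dA_w=\int_{D_r} w^2\,dA_w=0,\qquad \int_{D_r}|w|^2\,dA_w=\tfrac{\pi}{2}\,r^4.$$
Among all terms produced by distributing $[f(z)+f_z(z)w+f_{\ol{z}}(z)\ol{w}+\eta(w)]\,[1+(2/r)w]$, only two survive: the constant $f(z)$ and the cross term $(2/r)\,f_{\ol{z}}(z)\,\ol{w}\,w=(2/r)\,f_{\ol{z}}(z)\,|w|^2$, whose normalised integral evaluates to $r\,f_{\ol{z}}(z)$. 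The weighted average therefore equals $f(z)+r\,f_{\ol{z}}(z)+o(r)$, and comparison with (ii) forces $f_{\ol{z}}(z)=0$ pointwise. Since $f\in C^1(\Om)$ satisfies the Cauchy--Riemann equation, it is holomorphic, completing the circle of implications.

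The main technical point, and the place where care is required, is the treatment of the remainder $\eta(w)$ under the weight $1+(2/r)w$: the amplification factor $2/r$ in front of the linear term could, in principle, magnify the error. However, a direct estimate shows $|\int_{D_r}\eta(w)\,dA_w|=o(r^3)$ and $|\int_{D_r}\eta(w)\,w\,dA_w|=o(r^4)$, using $|\eta(w)|=o(|w|)$ together with $\int_{D_r}|w|^k\,dA_w=O(r^{k+2})$. Division by $|D_r|=\pi r^2$ (and, for the second integral, multiplication by $2/r$) leaves a total error of $o(r)$, as needed. This is where the $C^1$ hypothesis bundled into (ii) is genuinely used: it is precisely what guarantees a uniform $o(|w|)$ control on $\eta(w)$, allowing the asymptotic analysis to pass through despite the singular-looking prefactor $2/r$.
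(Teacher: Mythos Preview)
Your proposal is correct and follows essentially the same approach as the paper. The paper's proof simply notes that (i)$\Leftrightarrow$(iii) is Proposition \ref{th:new-mean-value}, that (i)$\Rightarrow$(ii) is trivial, and that (ii)$\Rightarrow$(iii) follows ``by an inspection of the arguments used in item (i) of the proof of Proposition \ref{th:new-mean-value}'' --- and those arguments are precisely the Wirtinger--Taylor expansion and moment computations you have written out in detail, leading to the identity that the weighted mean equals $f(z)+r\,f_{\ol z}(z)+o(r)$.
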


\begin{proof}
Proposition \ref{th:new-mean-value} ensures that $(i)$ and $(iii)$ are equivalent. It is also clear that $(ii)$ follows from $(i)$. By an inspection of the arguments used in item $(i)$ of the proof of Proposition \ref{th:new-mean-value}, we easily infer that $(iii)$ follows from $(ii)$.
\end{proof}

Now we define the functional $\cG:\CC^2\to\RR$ as
$$
\cG(a,b)=\int_{D_r(z)}\left|f(\zi)-a-b\,\ol{(\zi-z)}\right|^2 dA_\zi \ \mbox{ for } \ a, b\in\CC.
$$
It is not difficult to show that $\cG$ has only one minimum point $(a(f,r),b(f,r))$ in $\CC^2$, and we have that
\begin{eqnarray*}
&&a(f,r)(z)=\frac1{|D_r(z)|}\int_{D_r(z)} f(\zi)\,dA_\zi,  \\
&&b(r,f)(z)=\frac2{r^2 |D_r(z)|}\int_{D_r(z)} f(\zi)\,(\zi-z)\,dA_\zi.
\end{eqnarray*}
Thus, the mean value at the right-hand side of \eqref{new-mean-value} equals
$$
a(f,r)(z)+r\,b(f,r)(z)=\pi^r(f)(z,z+r),
$$
where
$$
\pi^r(f)(z,\zi)= a(f,r)(z)+b(f,r)(z)\,\ol{(\zi-z)}
$$
is the $L^2$-projection of $f$ on the subspace of affine anti-holomorphic functions. As a direct consequence of Corollary \ref{cor:new-mean-value} we have the following remark which gives an idea of how to define the right variational mean, in the nonlinear case.

\begin{rem}
\label{rem:projection}
Let $(a(f,r),b(f,r))$ be a minimum point of
$$
\cG(a,b)=\int_{D_r(z)}\left|f(\zi)-a-b\,\ol{(\zi-z)}\right|^2 dA_\zi.
$$
Then $f$ is \textit{holomorphic} in $\Om$ if and only if
$$
a(f,r)(z)+r\,b(f,r)(z) = f(z) + o(r) \text{ as }r\to 0^+.
$$
\par
For the sequel, we finally observe that the function $\cG$ shares the same critical (minimum) point of the function defined by
$$
\int_{D_r(z)}\left[|f(\zi)-a|^2+|f(\zi)-b\,\ol{(\zi-z)}|^2\right] dA_\zi \ \mbox{ for } \ a, b\in\CC.
$$
\end{rem}

\section{A nonlinear characterizations of holomorphic functions}
\label{sec:nonlinear-characterizations}

In this section, we introduce some variational means on spheres for complex-valued functions related to quite general convex densities and we shall study their asymptotic behavior as the radius of the spheres tends to zero. This analysis will lead to characterizations of complex-valued solutions of certain nonlinear equations and, as a by-product, to a new nonlinear characterization of holomorphic functions.

\subsection{Variational means of complex-valued functions}
\label{subsec:variational-means}
For the reader's convenience, we recall some notations and definitions from the introduction. \par
Let $F\in C^1([0,\infty))\cap C^2((0,\infty))$ be a convex function with $F(0)=F'(0)=0$
and such that $F(s)\to+\infty$ as $s\to+\infty$.
Let $\Om\subset\CC$ be an open domain and let  $f\in L_\loc^1(\Om; \CC)$. For any $c\in\CC$ and any closed disk $\ol{D_r(z)}\subset\Om$, we consider the function in \eqref{function-c}, i.e.
\begin{equation*}
\cF(c)=\int_{\pa D_r(z)} F\left(\left|f(\zi)-c\,\ol{(\zi-z)}\right|\right) dS_\zi.
\end{equation*}
It is convenient to set $H(w)=F(|w|)$ and rewrite \eqref{function-c} as
\begin{equation}
\label{function-c-H}
\cF(c)=\int_{\pa D_r(z)} H\left(f(\zi)-c\,\ol{(\zi-z)}\right) dS_\zi.
\end{equation}

It is clear that a minimum of $\cF$ on $\CC$ exists.
Let $\cC$ be the set of minimum points of $\cF$ on $\CC$. Any $c\in\cC$ satisfies the equation:
\begin{equation}
\label{fermat-c}
\int_{\pa D_r(z)} H_{\ol{w}}\left(f(\zi)-c\,\ol{(\zi-z)}\right) (\zi-z)\,dS_\zi=0,
\end{equation}
since $\pa_{\ol{c}}\cF=0$ at $c$. Moreover, if $\cC$ contains only one point $c^\cF(f,r)(z)$, then the last equation characterizes $c^\cF(f,r)(z)$.

\begin{prop}
\label{lem:existence-uniqueness-minimum}
Let $F\in C^1([0,\infty))\cap C^2((0,\infty))$ be a strictly convex function such that  $F(0)=F'(0)=0$
and $F(s)\to+\infty$ as $s\to+\infty$.
Then, for any $r>0$, there exists a unique minimum point $c^\cF(f,r)(z)$ for $\cF$ on $\CC$, i.e. $\cC=\{c^\cF(f,r)(z)\}$.
\end{prop}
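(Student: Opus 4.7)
The plan is to show that $\cF:\CC\to[0,+\infty]$ is strictly convex, lower semicontinuous, and coercive; from this, a standard variational argument yields existence and uniqueness of a minimum. Convexity is immediate: for each fixed $\zi\in\pa D_r(z)$ the map $c\mapsto|f(\zi)-c\,\ol{(\zi-z)}|$ is convex as a norm composed with an affine map, and $F$ is convex and non-decreasing on $[0,+\infty)$ (the latter because $F'$ is non-decreasing and $F'(0)=0$); convexity is preserved under integration.

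To upgrade this to strict convexity, I fix distinct $c_1,c_2\in\CC$ and $t\in(0,1)$, and for each $\zi\in\pa D_r(z)$ set $a(\zi)=f(\zi)-c_1\,\ol{(\zi-z)}$ and $b(\zi)=f(\zi)-c_2\,\ol{(\zi-z)}$. Since $|\zi-z|=r>0$, the difference $a(\zi)-b(\zi)=(c_2-c_1)\,\ol{(\zi-z)}$ is nonzero at every $\zi$ on the circle. I then establish the pointwise strict inequality
\begin{equation*}
F\bigl(|t\,a(\zi)+(1-t)\,b(\zi)|\bigr)<t\,F(|a(\zi)|)+(1-t)\,F(|b(\zi)|)
\end{equation*}
by a case analysis: if $|a(\zi)|\neq|b(\zi)|$, strict convexity of $F$ combined with the triangle inequality and monotonicity of $F$ gives the bound; if $|a(\zi)|=|b(\zi)|$, then because $a(\zi)\neq b(\zi)$ (and both must be nonzero) they cannot be non-negative multiples of one another, so the triangle inequality yields $|t\,a+(1-t)\,b|<|a|$, and strict monotonicity of $F$ on $(0,+\infty)$, which follows from strict convexity together with $F'(0)=0$, finishes. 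Integrating over $\pa D_r(z)$ turns the pointwise strict inequality into $\cF(t\,c_1+(1-t)\,c_2)<t\,\cF(c_1)+(1-t)\,\cF(c_2)$.

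For existence, I rely on the reverse triangle inequality $|f(\zi)-c\,\ol{(\zi-z)}|\ge|c|\,r-|f(\zi)|$: restricting to a positive-measure subset of $\pa D_r(z)$ on which $|f|$ is bounded by some $M$, Fatou's lemma yields $\cF(c)\to+\infty$ as $|c|\to+\infty$. Lower semicontinuity is inherited from convexity, so a minimum point exists, and strict convexity makes it unique, giving $\cC=\{c^\cF(f,r)(z)\}$. The main delicate step is the case $|a|=|b|$ in the strict-convexity argument; its resolution rests on the observation that $a(\zi)-b(\zi)$ is nonvanishing on the whole circle, which prevents the joint occurrence, at any single $\zi$, of equality in the triangle inequality and $|a|=|b|$.
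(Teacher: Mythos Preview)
Your proof is correct and follows the same line as the paper's: the paper simply asserts that ``the strict convexity of $F$ and the growth condition make sure that $\cF$ has only one minimum point,'' and your argument supplies precisely the details behind this claim (strict convexity of $c\mapsto F(|f(\zi)-c\,\ol{(\zi-z)}|)$ at each $\zi$ via the case split on $|a|=|b|$, plus coercivity from $F(s)\to+\infty$). Two minor remarks: lower semicontinuity of $\cF$ is more directly a consequence of Fatou's lemma applied to the nonnegative continuous integrand than of convexity per se, and the coercivity estimate you give is really a direct pointwise bound rather than an application of Fatou --- but neither point affects the validity of the argument.
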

\begin{proof}
The strict convexity of $F$ and the growth condition make sure that $\cF$ has only one minimum point, i.e. $\cC$ is a singleton.
\end{proof}

In what follows, $\La(s)$ is the function defined in \eqref{Lambda}.

\begin{prop}
\label{prop:equation-for-cr}
Let $F\in C^1([0,\infty))\cap C^2((0,\infty))$ be a convex function such that  $F(0)=F'(0)=0$, $F(s)\to+\infty$ as $s\to+\infty$ and $F'(s)/s\in L^1((0,1))$. Denote by $\cS$ the unit circle.
\par
For $r>0$, set $c_r=c^\cF(f_a,r)(z)$, where $f_a$ is the affine function defined by
$$
f_a(\zi)=\om+\si\,(\zi-z)+\tau\,\ol{(\zi-z)} \ \mbox{ for } \ \zi\in\CC,
$$
with $\om\in\CC\setminus\{0\}$ and $\si, \tau\in\CC$.
Then, it holds that
\begin{equation}
\label{equation-for-cr}
c_r=\tau+\al(r)\,\ol{\si}+\be(r)\,\si+\ga(r)\,(\ol{\tau}-\ol{c_r}),
\end{equation}
where
\begin{eqnarray*}
&&\al(r)=\frac{\int_\cS\int_0^1 \frac{F'(|w|)}{|w|}\,[\La(|w|)-1]\,\frac{w}{\ol{w}}\,dt\,dS_\zi}{\int_\cS\int_0^1 \frac{F'(|w|)}{|w|}\,[\La(|w|)+1]\,dt\,dS_\zi},  \\
&&\be(r)=\frac{\int_\cS\int_0^1 \frac{F'(|w|)}{|w|}\,\bigl[\La(|w|)+1\bigr]\,\zi^2 dt\,dS_\zi}{\int_\cS\int_0^1 \frac{F'(|w|)}{|w|}\,[\La(|w|)+1]\,dt\,dS_\zi}, \\
&&\ga(r)=\frac{\int_\cS\int_0^1 \frac{F'(|w|)}{|w|}\,\bigl[\La(|w|)-1\bigr]\,\frac{w}{\ol{w}}\,\zi^2 dt\,dS_\zi}{\int_\cS\int_0^1 \frac{F'(|w|)}{|w|}\,[\La(|w|)+1]\,dt\,dS_\zi}.
\end{eqnarray*}
Here, we denote $w=w(r,t;\zi)=\om+t\,r\,[\si\,\zi+(\tau-c_r)\,\ol{\zi}]$ for $t\in[0,1]$.
\par
In particular, if $F(s)=s^p/p$ with $p>1$, we have that
$$
c_r=\tau+\frac{p-2}{p}\,\al_p(r)\,\ol{\si}+\be_p(r)\,\si+\frac{p-2}{p}\,\ga_p(r)\,(\ol{\tau}-\ol{c_r}),
$$
where
\begin{eqnarray*}
&\ds\al_p(r)=\frac{\int_\cS\int_0^1 |w|^{p-2} \frac{w}{\ol{w}}\, dt\,dS_\zi}{\int_\cS \int_0^1|w|^{p-2} dt\,dS_\zi} \\
&\ds\be_p(r)=\frac{\int_\cS \int_0^1|w|^{p-2} \zi^2 dt\,dS_\zi}{\int_\cS\int_0^1 |w|^{p-2} dt\,dS_\zi},
\quad\ga_p(r)=\frac{\int_\cS \int_0^1 |w|^{p-2}\frac{w}{\ol{w}} \zi^2 dt\,dS_\zi}{\int_\cS\int_0^1 |w|^{p-2} dt\,dS_\zi}.
\end{eqnarray*}
\end{prop}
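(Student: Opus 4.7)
The plan is to start from the Fermat condition~\eqref{fermat-c}, which with $H(w)=F(|w|)$ and $g(w):=H_{\ol w}(w)=\frac{F'(|w|)}{2|w|}\,w$ reads
$$
\int_{\pa D_r(z)} g\bigl(f(\zi)-c\,\ol{(\zi-z)}\bigr)\,(\zi-z)\,dS_\zi=0.
$$
A change of variable $\zi=z+r\zi'$ pulls the integration back to the unit circle $\cS$; substituting $f=f_a$ makes the argument of $g$ become $W(\zi'):=\om+r\bigl[\si\,\zi'+(\tau-c_r)\ol{\zi'}\bigr]$, which is exactly $w(r,1;\zi')$ in the notation of the statement. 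Cancelling the common factor $r^2$, the defining identity reduces to
$$
\int_\cS g(W(\zi'))\,\zi'\,dS_{\zi'}=0.
$$

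The key ingredient is then a Wirtinger computation on $g$. Using $\pa_w|w|=\ol w/(2|w|)$, $\pa_{\ol w}|w|=w/(2|w|)$ together with the identity $\phi'(s)=s^{-2}F'(s)[\La(s)-1]$ for $\phi(s):=F'(s)/s$, a short calculation yields
$$
g_w(w)=\frac{F'(|w|)}{2|w|}\cdot\frac{\La(|w|)+1}{2},\qquad g_{\ol w}(w)=\frac{F'(|w|)}{2|w|}\cdot\frac{\La(|w|)-1}{2}\cdot\frac{w}{\ol w}.
$$
The hypothesis $F'(s)/s\in L^1((0,1))$ guarantees that these expressions remain integrable along the radial path $t\mapsto w(r,t;\zi')$ even on the measure-zero set of $\zi'$ for which that path may pass through the origin. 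Applying the fundamental theorem of calculus to $g$ along that path, one obtains
$$
g(W(\zi'))-g(\om)=r\int_0^1\bigl[g_w(w(r,t;\zi'))\,Z(\zi')+g_{\ol w}(w(r,t;\zi'))\,\ol{Z(\zi')}\bigr]\,dt,
$$
with $Z(\zi'):=\si\zi'+(\tau-c_r)\ol{\zi'}$. Inserting this into the Fermat identity, and observing that $g(\om)$ is constant while $\int_\cS\zi'\,dS_{\zi'}=0$, the $g(\om)$ contribution vanishes, and after dividing by $r$ one is left with a bilinear integral identity on $\cS\times(0,1)$.

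Finally, expanding $Z\,\zi'$ and $\ol Z\,\zi'$ via $|\zi'|^2=1$ on $\cS$ and separating the four resulting terms turns this identity into
$$
(\tau-c_r)\!\int g_w+\ol\si\!\int g_{\ol w}+\si\!\int g_w\,\zi'^2+(\ol\tau-\ol{c_r})\!\int g_{\ol w}\,\zi'^2=0,
$$
where each $\int$ abbreviates $\int_\cS\!\int_0^1\,dt\,dS_{\zi'}$ evaluated at $w(r,t;\zi')$. Solving this affine equation for $c_r$ produces~\eqref{equation-for-cr} once one identifies $\al(r),\be(r),\ga(r)$ as the indicated ratios of integrals; the common prefactor $\tfrac14\,F'(|w|)/|w|$ cancels between numerators and denominators, leaving exactly the $\La\pm1$ weights stated. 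For the specialization $F(s)=s^p/p$ one has $\La\equiv p-1$ and $F'(s)/s=s^{p-2}$, so $\La+1=p$ and $\La-1=p-2$, and factoring $(p-2)/p$ out of $\al$ and $\ga$ yields $\al_p,\be_p,\ga_p$ as given.

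I expect the main obstacle to be the Wirtinger derivative step, specifically the bookkeeping of the antiholomorphic factor $w/\ol w$ in $g_{\ol w}$, together with making the fundamental-theorem-of-calculus manipulation rigorous when the path $w(r,t;\zi')$ touches $0$; this is precisely what the integrability hypothesis $F'(s)/s\in L^1((0,1))$ is designed to resolve.
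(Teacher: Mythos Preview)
Your proposal is correct and follows essentially the same route as the paper: both start from the Fermat condition \eqref{fermat-c}, rescale to the unit circle, subtract the constant term $H_{\ol w}(\om)$ using $\int_\cS\zi\,dS_\zi=0$, compute the Wirtinger derivatives $H_{\ol w\,w}$ and $H_{\ol w\,\ol w}$ (your $g_w$ and $g_{\ol w}$), apply the fundamental theorem of calculus along the radial path $t\mapsto w(r,t;\zi)$ using the integrability assumption $F'(s)/s\in L^1((0,1))$, and then expand and solve the resulting linear relation for $c_r$. The only differences are notational (your $g$ versus the paper's $H_{\ol w}$) and the order in which the derivative computation and the FTC step are presented.
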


\begin{proof}
By a change of variable in \eqref{fermat-c}, we have that
$$
\int_{\cS} H_{\ol{w}}\left(\om+r\,[\si\,\zi+(\tau-c_r)\ol{\zi}]\right) \zi\,dS_\zi=0,
$$
and hence that
$$
\frac1{r} \int_{\cS} \left\{H_{\ol{w}}\left(\om+r\,[\si\,\zi+(\tau-c_r)\ol{\zi}]\right)-H_{\ol{w}}\left(\om\right) \right\} \zi\,dS_\zi=0,
$$
being as $\int_\cS\zi\,dS_\zi=0$.
\par
Since $H(w)=F(|w|)$, for every $w\neq 0$ we compute that
\begin{eqnarray*}
&&H_{\ol{w}\hspace{.5pt} w}(w)=\frac{|w|\,F''(|w|)+F'(|w|)}{4 |w|}=\frac{F'(|w|)}{4 |w|}\bigl\{\La(|w|)+1\bigr\}, \\
&&H_{\ol{w}\hspace{.5pt} \ol{w}}(w)=\frac{|w|\,F''(|w|)-F'(|w|)}{4 |w|}\,\frac{w}{\ol{w}}=\frac{F'(|w|)}{4 |w|}\bigl\{\La(|w|)-1\bigr\}\,\frac{w}{\ol{w}}.
\end{eqnarray*}
By our assumptions both $F'(s)/s$ and $F''(s)$ belong to $L^1((0,1))$. Thus, we can apply the fundamental theorem of calculus and obtain that
\begin{multline*}
0=\int_\cS\int_0^1\frac{d}{dt}H_{\ol{w}}(\om+t\,r\,[\si\,\zi+(\tau-c_r)\ol{\zi}])\,\zi\,dt\,dS_\zi= \\
\int_\cS\int_0^1\left\{H_{\ol{w} w}\left(w\right)[\si\,\zi^2+\tau-c_r]+
 H_{\ol{w}\, \ol{w}}\left(w\right)[\ol{\si}+(\ol{\tau}-\ol{c_r})\,\zi^2]\right\} dt\,dS_\zi,
\end{multline*}
where we mean that $w=w(r,t;\zi)$ for $t\in[0,1]$, for notational convenience.
\par
Hence we get that
\begin{multline*}
\int_\cS\int_0^1 \frac{F'(|w|)}{|w|}[\si\,\zi^2+\tau-c_r][\La(|w|)+1]\,dt\,dS_\zi+ \\
\int_\cS \int_0^1\frac{F'(|w|)}{|w|} [\ol{\si}+(\ol{\tau}-\ol{c_r})\,\zi^2][\La(|w|)-1]\,\frac{w}{\ol{w}}\,dt\,dS_\zi=0,
\end{multline*}
after some algebraic manipulations. Then, \eqref{equation-for-cr} follows at once.
\par
The proof of the formula for $F(s)=s^p/p$ follows by easy computations.
\end{proof}

\begin{lem}
\label{lem:asymptotics-F}
Let $F\in C^1([0,\infty))\cap C^2((0,\infty))$ be a strictly convex function such that  $F(0)=F'(0)=0$.
Assume that the function $\La$ defined in \eqref{Lambda} satisfies \eqref{Lambda-bounds}.
\par
Let $f_a$ be the affine function defined by
$$
f_a(\zi)=\om+\si\,(\zi-z)+\tau\,\ol{(\zi-z)} \ \mbox{ for } \ \zi\in\CC,
$$
where $\om\in\CC\setminus\{0\}$ and $\si, \tau\in\CC$. Then, we have that
\begin{equation}
\label{asymptotics-c-affine}
c^\cF(f_a,r)(z)=\tau+\frac{\La(|\om|)-1}{\La(|\om|)+1}\,\frac{\om}{\ol{\om}}\,\ol{\si}
+o(1) \ \mbox{ uniformly as } \ r\to 0^+.
\end{equation}
\end{lem}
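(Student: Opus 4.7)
The plan is to combine the explicit formula from Proposition \ref{prop:equation-for-cr} with a careful passage to the limit. Writing $c_r = c^\cF(f_a,r)(z)$ for brevity, that proposition gives
\[
c_r = \tau + \al(r)\,\ol\si + \be(r)\,\si + \ga(r)\,(\ol\tau - \ol{c_r}),
\]
with coefficients depending, through the quantity $w=w(r,t;\zi)=\om+tr[\si\zi+(\tau-c_r)\ol\zi]$, on $c_r$ itself. I therefore intend to: (i) establish a uniform-in-$r$ bound on $|c_r|$, so that $w\to\om$ uniformly in $t\in[0,1]$ and $\zi\in\cS$; then (ii) compute the limits of $\al(r),\be(r),\ga(r)$ by dominated convergence and pass to the limit in the displayed identity.

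For step (i), the key observation is that the ellipticity bounds \eqref{Lambda-bounds} yield
\[
\theta := \max\left\{\tfrac{|\La-1|}{\La+1} : \La\in[\La^-,\La^+]\right\} < 1,
\]
because the map $\La\mapsto|\La-1|/(\La+1)$ is continuous and strictly smaller than $1$ on the compact set $[\La^-,\La^+]$. Using $|w/\ol w|=1$, $|\zi^2|=1$ and the pointwise inequality $|\La(|w|)-1|\le\theta(\La(|w|)+1)$ inside the defining integrals, I obtain the $r$-uniform estimates $|\al(r)|, |\ga(r)|\le\theta$ and $|\be(r)|\le 1$. Taking moduli in the identity above and rearranging then yields
\[
(1-\theta)|c_r| \le (1+\theta)(|\si|+|\tau|),
\]
so $c_r$ stays bounded. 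With this control, $|w-\om|\le r(|\si|+|\tau|+|c_r|)\to 0$ uniformly, so in particular $w$ remains uniformly bounded away from $0$ (since $\om\neq 0$), and the integrands are uniformly controlled. Dominated convergence together with the elementary identity $\int_\cS \zi^2\,dS_\zi=0$ then gives
\[
\al(r)\to\frac{\La(|\om|)-1}{\La(|\om|)+1}\,\frac{\om}{\ol\om}, \qquad \be(r)\to 0, \qquad \ga(r)\to 0,
\]
the last two vanishing because their numerators carry the oscillatory factor $\zi^2$, while the denominators tend to the strictly positive quantity $2\pi\,F'(|\om|)[\La(|\om|)+1]/|\om|$. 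Passing to the limit in the identity for $c_r$ produces exactly \eqref{asymptotics-c-affine}.

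The main obstacle is step (i): without the uniform bound on $c_r$, the $c_r$-dependence inside $w$ would prevent $\be(r)$ and $\ga(r)$ from reducing to angular averages of $\zi^2$ in the limit, and the cancellation that makes them vanish would be lost. The strict inequality $\theta<1$, made available by the lower ellipticity bound $\La^->0$ in \eqref{Lambda-bounds}, is precisely what closes this fixed-point-type estimate for $c_r$.
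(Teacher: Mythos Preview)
Your proposal is correct and follows essentially the same approach as the paper's own proof: both invoke Proposition~\ref{prop:equation-for-cr}, use the bounds \eqref{Lambda-bounds} to show $|\al(r)|,|\ga(r)|\le\theta<1$ and $|\be(r)|\le 1$, deduce a uniform bound on $|c_r|$, and then pass to the limit using $w\to\om$ and $\int_\cS\zi^2\,dS_\zi=0$. The only minor omission is that you invoke Proposition~\ref{prop:equation-for-cr} without first checking its hypotheses $F(s)\to\infty$ and $F'(s)/s\in L^1((0,1))$; the paper derives these from \eqref{Lambda-bounds} via the inequalities $K s^{\La^-}\le F'(s)\le K s^{\La^+}$ for $s\ge 1$ (and the analogous bound near $0$), so you should add one sentence to that effect.
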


\begin{proof}
Notice that the assumptions on $F$ and $\La(s)$ give that the growth conditions
\begin{eqnarray*}
&&\frac{K}{1+\La^-}\,s^{1+\La^-}+K_-\le F(s)\le \frac{K}{1+\La^+}\,s^{1+\La^+}+K_+, \\
&&K\,s^{\La^-}\le F'(s)\le K\,s^{\La^+},
\end{eqnarray*}
hold for any $s\ge 1$, with $K=F'(1)$ and $K_-, K_+\in\RR$.
In particular, $F(s)\to+\infty$ as $s\to+\infty$ and $F'(s)/s\in L^1((0,1))$. Thus, we can apply Proposition \ref{prop:equation-for-cr}.
\par
For notational convenience set $c_r= c^\cF(f_a,r)(z)$. Next, we claim that, under our assumptions on $\La(s)$, we have that
$|c_r|$ remains bounded as $r\to 0^+$. In fact, we can easily infer that $|\be(r)|\le 1$ and
$$
|\al(r)|, |\ga(r)|\le \frac{\int_\cS \int_0^1\frac{F'(|w|)}{|w|}\,|\La(|w|)-1|\,dt\,dS_\zi}{\int_\cS \int_0^1 \frac{F'(|w|)}{|w|}\,[\La(|w|)+1]\,dt\,dS_\zi}\le\max\left[\frac{\La^+-1}{1+\La^+},\frac{1-\La^-}{1+\La^-}\right]<1,
$$
since
$$
|\La(s)-1|=[1+\La(s)]\,\frac{|\La(s)-1|}{1+\La(s)}\le \max\left[\frac{\La^+-1}{1+\La^+},\frac{1-\La^-}{1+\La^-}\right] [1+\La(s)].
$$
Thus, from \eqref{equation-for-cr} we infer that
$$
\left\{ 1-\max\left[\frac{\La^+-1}{1+\La^+},\frac{1-\La^-}{1+\La^-}\right]\right\} |c_r|\le 2\,(|\tau|+|\si|),
$$
for any $r>0$. Hence, up to subsequences, $c_r$ converges as $r\to 0$ to some complex number $c_0$.
\par
Now, in the limiting process as $r\to 0$, the growth conditions on $F'(s)$ allow to use the arguments (with $p=\La^+$) presented in the proof of \cite[Lemma 3.1]{IMW}, which are based on applications of the dominated convergence theorem and its generalized version.
Therefore, since $w(r)\to\om$ as $r\to 0$, with $\om\ne 0$, and $\int_\cS\zi^2 dS_\zi=0$, we can conclude that
$$
\al(r)\to \frac{\La(|\om|)-1}{\La(|\om|)+1}\,\frac{\om}{\ol{\om}} \ \mbox{ and } \ \be(r), \ga(r)\to 0 \ \mbox{ uniformly as } \ r\to 0.
$$
By these limits, \eqref{equation-for-cr} gives \eqref{asymptotics-c-affine}.
\end{proof}

\begin{proof}[Proof of Theorem \ref{th:asymptotics-F}]
Thanks to Lemma \ref{lem:asymptotics-F}, it is sufficient to show that
$$
\lim_{r\to 0} c^\cF(f,r)(z)=\lim_{r\to 0} c^\cF(f_a,r)(z),
$$
where $f_a(\zi)$ is the affine function defined in Lemma \ref{prop:equation-for-cr}, with the choice $\om=f(z)$, $\si=f_z(z)$, and $\tau=f_{\ol{z}}(z)$.
\par
Since $f$ is differentiable at $z\in\Om$, we have that
$$
f(\zi)=f(z)+f_z(z)\,(\zi-z)+f_{\ol{z}}(z)\,\ol{(\zi-z)}+o(|\zi-z|) \ \mbox{ as } \ |\zi-z|\to 0.
$$
This means that, for any $\eta>0$, there exists $r_\eta>0$ such that
\begin{equation}
\label{bound-f-fa}
|f(z+r\,\zi)-f_a(z+r\,\zi)|<\eta\,r \ \mbox{ for } \ \zi\in D \ \mbox{ and } \ 0<r<r_\eta.
\end{equation}
\par
Next, for notational convenience, we set $c_r=c^\cF(f,r)(z)$ and $c^a_r=c^\cF(f_a,r)(z)$.
By the characterizations of $c_r$ and $c_r^a$ (see \eqref{fermat-c}), we know that
$$
\frac1{r} \int_{\cS} \left\{H_{\ol{w}}\left(f(z+r\,\zi)-r\,c_r\,\ol{\zi}\right)-H_{\ol{w}}\left(f_a(z+r\,\zi)-r\,c_r^a\,\ol{\zi}\right) \right\} \zi\,dS_\zi=0.
$$
Then, we proceed with the use of the fundamental theorem of calculus, as in the proof of Proposition \ref{prop:equation-for-cr}. This time we set
$$
w=w(t,r;\zi)=f_a(z+r\,\zi)-r\,c_r^a\,\ol{\zi}+t\,[f(z+r\,\zi)-f_a(z+r\,\zi)-r\,(c_r-c_r^a)\,\ol{\zi}]
$$
and obtain:
\begin{multline*}
\int_\cS\int_0^1H_{\ol{w} w}\left(w\right)\left[\frac{f(z+r\,\zi)-f_a(z+r\,\zi)}{r}-(c_r-c_r^a)\,\ol{\zi}\right]\zi\,dt\,dS_\zi+\\
\int_\cS\int_0^1H_{\ol{w} \ol{w}}\left(w\right)\left[\frac{\ol{f(z+r\,\zi)}-\ol{f_a(z+r\,\zi)}}{r}-\ol{(c_r-c_r^a)}\,\zi\right]\zi\,dt\,dS_\zi=0.
\end{multline*}
\par
Now, we use the fact that $H(w)=F(|w|)$ and follow the arguments of the proof of Lemma \ref{lem:asymptotics-F}. After similar manipulations, from \eqref{bound-f-fa} we finally gain the inequality
$$
|c_r-c_r^a|\le C\,\eta,
$$
where $C=C(\La^-, \La^+)$ is a positive constant. This means that $|c_r-c_r^a|\to 0$ as $r\to 0$, since
$$
\limsup_{r\to 0}|c_r-c_r^a|\le C\,\eta
$$
and $\eta$ is arbitrary. The proof is complete.
\end{proof}

\begin{rem}[{\sl Quasiregular} functions]
\label{quasireg}
\rm{
Note that from \eqref{general-eq-f} it follows that, if $c^\cF(f,r)(z)=o(1) \ \mbox{ as } \ r\to 0^+ $, then
$$
|f_{\ol{z}}(z)| \leq \left|\frac{\La(|f(z)|)-1}{\La(|f(z)|)+1}\right|\,|\ol{f}_{\ol{z}}(z)|\leq  \max\left[\frac{\La^+-1}{\La^+ +1},\frac{\frac{1}{\La^-}-1}{\frac{1}{\La^-}+1}\right] |f_z(z)|,
$$
for every $z$ such that $f(z)\neq 0$. Thus, if $f$ is a non-constant function belonging to $W^{1,2}_{\rm{loc}}(\Omega)$, we obtain that $f$ is $K$-quasiregular with
$$
K=\max\left[\La^+,\frac{1}{\La^-}\right]\in [1,+\infty).
$$
(For the definition and properties of $K$-quasiregular functions see the discussions in  \cite[Section 2]{IwMa} and \cite[Section 3]{AlLuRoss}.)
}
\end{rem}

\smallskip

\subsection{The case of $p$-means}
An important special case occurs when we choose $F(s)=s^p/p$, $1<p<\infty$.
Thus, we set $c_p(f,r)=c^\cF(f,r)(z)$ and  call it a \textit{$p$-mean}. Then, we compute that $\La(s)=p-1$, and hence \eqref{general-eq-f} reads as
\begin{equation}
\label{eq-f-p}
f_{\ol{z}}+\frac{p-2}{p}\,\frac{f}{\ol{f}}\,\ol{f}_{\ol{z}}=0 \ \mbox{ in } \ \Om.
\end{equation}

We shall next examine apart the case in which $p=\infty$.

\begin{lem}
\label{lem:infinity}
Let
$$
f_a(\zi)=\om+\si\,(\zi-z)+\tau\,\ol{(\zi-z)} \ \mbox{ for } \ \zi\in\CC,
$$
with $\om, \si, \tau\in\CC$ and $\om\ne 0$. Let $c_\infty(f_a,r)$ be any minimum point of
the function $\cF_\infty:\CC\to [0,\infty)$ defined by
$$
\cF_\infty(c)=\max_{\zi\in \pa D_r(z)} \left|f_a(\zi)-c\,\ol{(\zi-z)}\right|, \ c\in\CC.
$$
Then, it holds that
$$
c_\infty(f_a,r)=\tau+\frac{\om}{\ol\om}\,\ol{\si}+o(1) \ \mbox{ as } \ r\to 0^+.
$$
\end{lem}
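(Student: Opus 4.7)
The plan is to follow the same strategy as Lemma \ref{lem:asymptotics-F}, but since the Euler--Lagrange identity \eqref{fermat-c} is unavailable for the sup norm, I would replace it by a direct minimax analysis. First I would pass to the unit circle via the change of variable $\zi \mapsto z + r\,\zi$ with $\zi \in \cS$ and set $d = \tau - c$, reducing the problem to minimizing
$$
M_r(d) := \max_{\zi \in \cS}\bigl|\om + r\,(\si\,\zi + d\,\ol{\zi})\bigr|
$$
over $d \in \CC$. Writing $d_r := \tau - c_\infty(f_a, r)$, the goal becomes $d_r \to d_* := -\om\,\ol{\si}/\ol{\om}$, which is equivalent to the claimed asymptotic formula.

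Next, starting from $|\om + u|^2 = |\om|^2 + 2\,\re[\ol{\om}\, u] + |u|^2$ and the identity $\re[\ol{\om}\, d\,\ol{\zi}] = \re[\om\,\ol{d}\,\zi]$ for $\zi \in \cS$, a direct computation gives
$$
\max_{\zi \in \cS}\re\bigl[\ol{\om}(\si\,\zi + d\,\ol{\zi})\bigr] = \bigl|\ol{\om}\si + \om\,\ol{d}\bigr|.
$$
Since the $|u|^2$ term contributes $O(r^2)$ uniformly for $d$ in any bounded subset of $\CC$, I would deduce the expansion
$$
M_r(d) = |\om| + r\,g(d) + O(r^2), \qquad g(d) := \frac{|\ol{\om}\si + \om\,\ol{d}|}{|\om|},
$$
uniformly on bounded sets. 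Since $\om \neq 0$, the limit function $g$ is convex and coercive, and attains its unique minimum $g(d_*) = 0$ precisely at $d_*$.

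The main obstacle is that this expansion is uniform only on bounded sets, so I first need an a priori bound on $d_r$. Evaluating the max in the squared identity at the $\zi \in \cS$ that maximizes $\re[\ol{\om}(\si\,\zi + d\,\ol{\zi})]$, and using that the $r^2|\si\,\zi + d\,\ol{\zi}|^2$ contribution is nonnegative, one obtains
$$
M_r(d)^2 \geq |\om|^2 + 2r\,|\ol{\om}\si + \om\,\ol{d}| \geq |\om|^2 + 2r\,|\om|\bigl(|d| - |\si|\bigr),
$$
while the expansion at $d_*$ gives $M_r(d_*)^2 \leq |\om|^2 + O(r^2)$. Since $M_r(d_r) \leq M_r(d_*)$, combining the two bounds forces $|d_r| \leq |\si| + O(r)$, so $\{d_r\}_{r>0}$ remains bounded as $r \to 0^+$.

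Once boundedness is secured, I would extract any subsequence $r_n \to 0^+$ along which $d_{r_n} \to d^\#$. Applying the uniform expansion on a bounded set containing all $d_{r_n}$ yields
$$
g(d^\#) = \lim_{n\to\infty} \frac{M_{r_n}(d_{r_n}) - |\om|}{r_n} \leq \lim_{n\to\infty} \frac{M_{r_n}(d_*) - |\om|}{r_n} = g(d_*) = 0,
$$
so $d^\# = d_*$ by uniqueness of the minimum of $g$. Since every subsequential limit of $d_r$ coincides with $d_*$, the full convergence $d_r \to d_*$ follows, which is equivalent to the asymptotic formula for $c_\infty(f_a, r)$ stated in the lemma.
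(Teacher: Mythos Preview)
Your proof is correct and follows essentially the same approach as the paper: after the change of variable to the unit circle, both reduce to the rescaled functional $\bigl(M_r(d)^2-|\om|^2\bigr)/r$ (or equivalently your expansion of $M_r(d)$), obtain the same lower bound $2|\ol{\om}\si+\om\ol{d}|$ to establish boundedness of the minimizers, and then conclude by uniform convergence on compact sets to the limit function $2|\om\ol{\si}+\ol{\om}(\tau-c)|$, whose unique minimum yields the desired formula. The only cosmetic difference is that you compare $M_r(d_r)$ with $M_r(d_*)$ to get the a priori bound, whereas the paper compares $\cF_\infty(c_r)$ with $\cF_\infty(0)$; both serve the same purpose.
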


\begin{proof}
We fix $\om, \si, \tau\in\CC$ with $\om\ne 0$. For notational convenience, we set $c_r=c_\infty(f_a,r)$.
We have that
$$
\cF_\infty(c)=\max_{\zi\in \cS} \left|\om+r\,[\si\,\zi+(\tau-c)\,\ol{\zi}]\right|
$$
and notice that $c_r$ also minimizes the function
\begin{multline*}
\CC\ni c\mapsto \frac{\cF_\infty(c)^2-|\om|^2}{r}= \\
\max_{\zi\in\cS}\left\{2\,\re \bigl[\ol{\om}\,[\si\,\zi+(\tau-c)\,\ol{\zi}]\bigr]
+r\,\bigl|\si\,\zi+(\tau-c)\,\ol{\zi}\bigr|^2\right\}.
\end{multline*}
Next, we obeserve that
\begin{multline*}
\frac{\cF_\infty(c)^2-|\om|^2}{r}\ge
2\,\max_{\zi\in\cS}\left\{\re \bigl[\ol{\om}\,[\si\,\zi+(\tau-c)\,\ol{\zi}]\bigr]\right\}= \\
2\,\max_{\zi\in\cS}\left\{\re \bigl[\bigl(\om\,\ol{\si}+\ol{\om}\,(\tau-c)\bigr)\ol{\zi}\bigr]\right\}=
2\,|\ol{\om}\,\si+\om\,\ol{(\tau-c)}|\ge2\, |\om|\,(|\tau-c|-|\si|).
\end{multline*}
Thus, we infer that
$$
|\tau-c_r|\le |\si|+\frac{\cF_\infty(c_r)^2-|\om|^2}{2\,r\,|\om|}\le
|\si|+\frac{\cF_\infty(0)^2-|\om|^2}{2\,r\,|\om|},
$$
and the last term, and hence $c_r$, remains bounded for $0<r\le 1$.
\par
Now, as $r\to 0^+$, it is clear that
$$
\frac{\cF_\infty(c)^2-|\om|^2}{r}\to 2\,\max_{\zi\in\cS}\left\{\re \bigl[(\om\,\ol{\si}+\ol{\om}\,(\tau-c)\bigr]\ol{\zi}\right\}=2\,\bigl|\om\,\ol{\si}+\ol{\om}\,(\tau-c)\bigr|,
$$
uniformly in $c$ on compact subsets of $\CC$.
\par
Therefore, any minimum point $c_\infty(f_a,r)$ converges to the unique minimum point
$$
\tau+\frac{\om}{\ol\om}\,\ol{\si}
$$
of the function $\CC\ni c\mapsto 2\,\bigl|\om\,\ol{\si}+\ol{\om}\,(\tau-c)\bigr|$ and this gives the desired conclusion.
\end{proof}

\begin{cor}
\label{th:asymptotics-infty}
Let $f\in C^1(\Om;\CC)$ and let $c_\infty(f,r)$ be any minimum point of the function $\cF_\infty:\CC\to [0,\infty)$ defined by
$$
\cF_\infty(c)=\max_{\zi\in \pa D_r(z)} |f(\zi)-c\,\ol{(\zi-z)}|, \ c\in\CC.
$$
Then, away from the zeroes of $f$, we have that
$$
c_\infty(f,r)=o(1) \ \mbox{ as } \ r\to 0^+
$$
if and only if
$f$ satisfies the equation
\begin{equation}
\label{eq-f-infty}
f_{\ol{z}}+\frac{f}{\ol{f}}\,\ol{f}_{\ol{z}}=0 \ \mbox{ in } \ \Om.
\end{equation}
\end{cor}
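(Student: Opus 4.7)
The plan is to follow the same two-step pattern as the proof of Theorem \ref{th:asymptotics-F}: Lemma \ref{lem:infinity} already handles the affine case, so only a \emph{linearization step} remains. Fix $z\in\Om$ with $f(z)\ne 0$ and set $\om := f(z)$, $\si := f_z(z)$, $\tau := f_{\ol z}(z)$; let $f_a(\zi) = \om + \si\,(\zi-z) + \tau\,\ol{(\zi-z)}$ be the corresponding first-order Taylor polynomial of $f$ at $z$. Since $f\in C^1(\Om;\CC)$, for every $\eta>0$ there exists $r_\eta>0$ such that $|f(\zi) - f_a(\zi)|\le \eta\,r$ for all $\zi\in \pa D_r(z)$ and $0<r<r_\eta$, exactly as in \eqref{bound-f-fa}.

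By the triangle inequality inside the supremum defining $\cF_\infty$, one immediately obtains
\[
\bigl|\cF_\infty^f(c) - \cF_\infty^{f_a}(c)\bigr| \le \eta\,r \quad\text{for every } c\in\CC \text{ and } 0<r<r_\eta,
\]
where the superscript indicates the function used inside the max. I would then pass to the normalized functional $\Phi_r^g(c) := (\cF_\infty^g(c)^2 - |\om|^2)/r$ employed in the proof of Lemma \ref{lem:infinity}. Writing
\[
\Phi_r^f(c) - \Phi_r^{f_a}(c) = \frac{\cF_\infty^f(c) - \cF_\infty^{f_a}(c)}{r}\,\bigl[\cF_\infty^f(c) + \cF_\infty^{f_a}(c)\bigr],
\]
and noting that the first factor is bounded by $\eta$ while the second is bounded on compact subsets of $\CC$, I conclude that $\Phi_r^f(c)\to 2\,|\om\,\ol\si + \ol\om\,(\tau-c)|$ uniformly on compacts, just as $\Phi_r^{f_a}$ does in Lemma \ref{lem:infinity}. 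A uniform bound on $|c_\infty(f,r)|$ as $r\to 0^+$ is obtained by repeating verbatim the boundedness argument of Lemma \ref{lem:infinity} and absorbing the $o(r)$ perturbation coming from $f-f_a$.

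From here, any accumulation point of $c_\infty(f,r)$ as $r\to 0^+$ must be a minimizer of the limit functional $c\mapsto 2\,|\om\,\ol\si + \ol\om\,(\tau - c)|$, whose unique minimizer is $\tau + (\om/\ol\om)\,\ol\si$. Hence $c_\infty(f,r)\to \tau + (\om/\ol\om)\,\ol\si$ as $r\to 0^+$, and this limit vanishes precisely when $f_{\ol z}(z) + (f(z)/\ol{f(z)})\,\ol f_{\ol z}(z) = 0$ (using $\ol{f_z}=\ol f_{\ol z}$), which is \eqref{eq-f-infty}. The main obstacle is the potential non-uniqueness of $c_\infty(f,r)$ for fixed $r>0$: the $L^\infty$ functional $\cF_\infty^f$ need not be strictly convex, so the corollary asserts convergence for an \emph{arbitrary} selection of minimizers. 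The subsequence argument above bypasses this issue, since it shows that every such selection is bounded and accumulates only at the unique minimizer of the limit functional.
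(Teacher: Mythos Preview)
Your proposal is correct and follows essentially the same approach as the paper's own proof, which is extremely terse: it simply records the Taylor expansion $f(\zi)=f_a(\zi)+o(r)$ and then states that ``the conclusion follows by the same argument used in the proof of Lemma \ref{lem:infinity}.'' You have spelled out precisely what that argument entails --- the $\eta r$ comparison of the two $\cF_\infty$ functionals, passage to the normalized quantity $\Phi_r$, the uniform bound on minimizers, and the subsequence argument forcing convergence to the unique minimizer of the limit functional --- and your treatment of the non-uniqueness of $c_\infty(f,r)$ is a welcome clarification that the paper leaves implicit.
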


\begin{proof}
We observe that, by a Taylor expansion, by choosing $\om=f(z), \si=f_z(z)$, and $\tau=f_{\ol{z}}(z)$ we have that
$$
f(\zi)=f_a(\zi)+o(r) \ \mbox{ as } \ r\to 0^+,
$$
uniformly on any compact subset of $\Om$.
The conclusion then follows by the same argument used in the proof of Lemma \ref{lem:infinity}.
\end{proof}

\begin{rem}
{\rm
Notice that in the limit for $p\to\infty$ from \eqref{eq-f-p} we obtain \eqref{eq-f-infty}.
\par
Also, observe that \eqref{eq-f-infty} can be rewritten as
$$
f\,\pa_{\ol{z}} \log(f\,\ol{f})=0
$$
away from the zeroes of $f$. In other words, we have that $\log |f|$ is a real-valued holomorphic function, and hence is constant (toghether with $|f|$) on 
domains which do not contain zeroes of $f$.
}
\end{rem}

\begin{rem}[$p$-harmonic functions]
\label{pharm}
\rm{
For $\Om\subseteq\CC$, let $u\in C^2(\Om;\RR)$ and assume that $f=2\,\pa_z u=u_x-i\,u_y$. It follows that $2\,f_{\ol{z}}=\De u$ and, in particular, $f_{\ol{z}}$ is real-valued.  This gives that \eqref{eq-f-p} can be rewritten as
\begin{equation}
\label{bojarski-iwaniec}
f_{\ol{z}}= \frac{2-p}{2p} \left(\frac{\ol{f}}{f}\,f_z + \frac{f}{\ol{f}}\,\ol{f}_{\ol{z}} \right)
\end{equation}
at any point for which $f\ne 0$, that is to say at points such that $\na u\ne 0$.
\par
From \cite{BI} we know that $f=2\,\pa_z u$ is a solution of \eqref{bojarski-iwaniec} if and only if
$\De^G_p u= 0$.
Hence, we can conclude that $c_p(\pa_z u,r)=o(1)$  as $r\to 0^+$ if and only if
$\De_p^G u=0$.
\par
Note that, as shown in \cite{BI},  the function defined by $g=|f|^{\sqrt{p-1}-1}f$
is a solution of the Beltrami equation:
$$
g_{\ol{z}}=\frac{1-\sqrt{p-1}}{1+\sqrt{p-1}}\,\frac{\ol{g}}{g}\,g_z,
$$
with the dilation coefficient
$$
\frac{|1-\sqrt{p-1}|}{1+\sqrt{p-1}}<1.
$$
}
\end{rem}

\begin{rem}
{\rm
Note that, for the variational $p$-mean defined in \eqref{variational}, we have that $\La(s)=\Lambda^-=\La^+=p-1$ for every $s>0$.
\par
From Remark \ref{quasireg}, it thus follows  that, if $c_p(f,r)=o(1)$ as $r \to 0^+$, $f\in W^{1,2}_{\rm{loc}}(\Om)$, and the set $\{z\in \Om : f(z)=0\}$ has zero Lebesgue measure, then $f$ is $K$-quasiregular with
$$
K=\max\left[p-1,\frac{1}{p-1}\right].
$$
In particular, from Remark \ref{pharm} we deduce that, if $u$ is $p$-harmonic with non-constant gradient and $\na u\in W^{1,2}_{\rm{loc}}(\Om;\RR^2)$, then $\nabla u$ is $K$-quasiregular (see also \cite[Introduction]{ArLin} and \cite[Section 2]{Aron}).
\par
Recall that, in the plane, the critical points of $p$-harmonic functions are isolated unless $u$ is constant (see \cite{BI} or Corollary 1 of \cite{Man}). This fact in \cite{LinMan} is used to prove an alternative asymptotic mean value property which holds pointwise in all $\Omega$ at least in the range $1<p< p_0 = 9.52520797..$. This range was later extended for all $1<p<\infty$ in \cite{ArrLlo}.
}
\end{rem}

\smallskip

\subsection{A nonlinear asymptotic characterization of holomorphic functions}
\label{subsec:nonlinear-holomorphic}

If we examine \eqref{general-eq-f}, we can see that the transformation
$$
f=\psi(|g|)\,\frac{g}{|g|}
$$
gives that, if
$$
\La(\psi(t))\,\psi'(t)-\frac{\psi(t)}{t}=0,
$$
then $g_{\ol{z}}=0$, away from the zeroes of $g$.
By recalling the definition of $\La$, we see that $F'(\psi(t))=C\,t$  for some positive constant $C$, which we can normalize to $1$. Therefore, we obtain that the function
$$
g=F'(|f|)\frac{f}{|f|}
$$
is holomorphic if $f$ satisfies \eqref{general-eq-f}.
In particular, for equation \eqref{eq-f-p}, we have that
$g=|f|^{p-2} f$
is holomorphic.
\par
We shall now use these remarks to obtain a characterization of holomorphic functions by means of a nonlinear asymptotic mean value property. In order to see this, besides assuming that $F\in C^1([0,\infty))\cap C^2((0,\infty))$ is strictly convex and that $F(0)=F'(0)=0$, we also require that $F$ satisfies the growth condition
\begin{equation*}
\lim_{s\to\infty} \frac{F(s)}{s}=+\infty.
\end{equation*}
Under these assumptions, the so-called \textit{Young or Fenchel conjugate} of $F$ is defined for every $t\ge 0$ by
$$
G(t)=\sup\{t\,s-F(s): s\ge 0\}.
$$
It easily seen that $F'$ is invertible, because strictly increasing and continuous. In addition, it is a known fact that $G'$ is the inverse of $F'$, and hence
\begin{eqnarray}
\label{aus2}
&&F'(G'(t))=t, \quad F''(G'(t))\,G''(t)=1, \nonumber \\
&&G'(F'(s))=s, \quad G''(F'(s))\,F''(s)=1,  \\
&&\La(G'(t))=\frac{G'(t)}{t\,G''(t)}. \nonumber
\end{eqnarray}
\par
We are now in position to state the following companion of Lemma \ref{lem:asymptotics-F}.
\begin{lem}
\label{lem:nonlinear-holomorphic}
Let $F\in C^1([0,+\infty))\cap C^2((0,\infty))$ be a strictly convex function such that $F(0)=F'(0)=0$. Also, let $\La$ be defined in \eqref{Lambda} and satisfy \eqref{Lambda-bounds}. Let $G: [0,\infty)\to [0,\infty)$ be the Young conjugate of $F$.
\par
Suppose that $g$ is differentiable in $\Om$ and let $c^\cJ(g,r)(z)$ be the minimum point in $\CC$ of the function defined in \eqref{defJ}, i.e.
\begin{equation*}
\cJ(c)= \int_{\pa D_r(z)} F\left(\left|\frac{G'(|g(\zi)|)}{|g(\zi)|}g(\zi)-c\,\ol{(\zi-z)}\right|\right) dS_\zi, \ \ c\in\CC.
\end{equation*}
Then, we have that
\begin{equation}
\label{nonlinear-holomorphic}
c^\cJ(g,r)(z) = \frac{2}{1+\La(G'(|g(z)|))}\frac{G'(|g(z)|)}{|g(z)|} g_{\ol{z}}(z) + o(1) \ \mbox{ as } \ r\to 0^+,
\end{equation}
at every $z\in\Om$ such that $g(z)\neq 0$.
\end{lem}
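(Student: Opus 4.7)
The plan is to reduce the assertion to the quantitative asymptotics that underlie the proof of Theorem \ref{th:asymptotics-F}, by an appropriate change of the unknown function. I would set
$$
f(\zi)=\frac{G'(|g(\zi)|)}{|g(\zi)|}\,g(\zi),\qquad\zi\in\Om\text{ with }g(\zi)\neq 0,
$$
so that $\cJ(c)=\cF(c)$ for this choice of $f$; in particular the two functionals share the same unique minimum point, and $c^\cJ(g,r)(z)=c^\cF(f,r)(z)$. Since $F$ is strictly convex and of class $C^2((0,\infty))$, the identities collected in \eqref{aus2} give $G'\in C^1((0,\infty))$, so $f$ inherits differentiability from $g$ on the open set where $g\neq 0$.

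The next step is to extract from the proof of Theorem \ref{th:asymptotics-F} the full asymptotic expansion that is implicitly established there. Indeed, Lemma \ref{lem:asymptotics-F} combined with the Taylor-comparison estimate $|c_r-c_r^a|\to 0$ in that proof (applied to the affine model $f_a$ fitted to $f$ via $\om=f(z)$, $\si=f_z(z)$, $\tau=f_{\ol z}(z)$) yields, at every $z$ with $f(z)\neq 0$,
$$
c^\cF(f,r)(z)=f_{\ol z}(z)+\frac{\La(|f(z)|)-1}{\La(|f(z)|)+1}\,\frac{f(z)}{\ol{f(z)}}\,\ol{f_z(z)}+o(1)\qquad\text{as }r\to 0^+.
$$
Using that $|f|=G'(|g|)$ and $f/\ol f=g/\ol g$, what remains is to translate the main term into data of $g$.

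The core computation is as follows. Writing $f=h(|g|)\,g$ with $h(t)=G'(t)/t$, Wirtinger calculus applied to $|g|^2=g\,\ol g$ gives $|g|_z=(\ol g\,g_z+g\,\ol g_z)/(2|g|)$, together with $\ol{|g|_z}=|g|_{\ol z}$ and $\ol{g_z}=\ol g_{\ol z}$; these formulas render $f_{\ol z}$ and $\ol{f_z}$ as explicit linear combinations of $g_{\ol z}$ and $\ol g_{\ol z}$. The key algebraic identity is
$$
h'(|g|)=\frac{|g|\,G''(|g|)-G'(|g|)}{|g|^2}=-\,\frac{\La(G'(|g|))-1}{\La(G'(|g|))}\,\frac{G'(|g|)}{|g|^2},
$$
which follows at once from the relation $\La(G'(t))=G'(t)/(t\,G''(t))$ in \eqref{aus2}. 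Substituting these expressions into
$$
f_{\ol z}+\frac{\La-1}{\La+1}\,\frac{g}{\ol g}\,\ol{f_z},\qquad \La:=\La(G'(|g|)),
$$
the coefficient of $(g/\ol g)\,\ol g_{\ol z}$ vanishes identically, while the coefficient of $g_{\ol z}$ collapses to $\frac{2}{1+\La}\,\frac{G'(|g|)}{|g|}$. This is exactly the right-hand side of \eqref{nonlinear-holomorphic}.

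The only obstacle I expect is the bookkeeping of this last algebraic simplification; once $h'(|g|)$ is rewritten through $\La$, the cancellation of the $\ol g_{\ol z}$-terms is forced by the structure of the expansion, and no further analytic input is needed beyond the asymptotic expansion already produced in the proof of Theorem \ref{th:asymptotics-F}.
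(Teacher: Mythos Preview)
Your proposal is correct and follows essentially the same route as the paper: define $f=G'(|g|)\,g/|g|$, identify $c^\cJ(g,r)=c^\cF(f,r)$, invoke the asymptotic expansion established in Lemma~\ref{lem:asymptotics-F} together with the $|c_r-c_r^a|\to 0$ estimate from the proof of Theorem~\ref{th:asymptotics-F}, and then carry out the Wirtinger-calculus computation. Your write-up is in fact more explicit than the paper's, which dismisses the final step as ``tedious, but straightforward, calculations involving \eqref{aus2}'' and records the equivalent intermediate form $\dfrac{2\,G'(|g|)\,G''(|g|)}{G'(|g|)+|g|\,G''(|g|)}\,g_{\ol z}$ before reaching \eqref{nonlinear-holomorphic}.
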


\begin{proof}
It is sufficient to set $f(\zi)=G'(|g(\zi)|)\,g(\zi)/|g(\zi)|$ and apply Lemma \ref{lem:asymptotics-F} to $f$. In fact, we have that
\begin{multline*}
c^\cJ(g,r)=c^\cF(f,r)= \\
f_{\ol{z}}(z)+\frac{\La(|f(z)|)-1}{\La(|f(z)|)+1}\,\frac{f(z)}{\ol{f(z)}}\,\ol{f}_{\ol{z}}(z)+o(1)=
\frac{2\,G'(|g|)\,G''(|g|)}{G'(|g|)+|g|\,G''(|g|)}\,g_{\ol{z}}+o(1),
\end{multline*}
after tedious, but straightforward, calculations involving \eqref{aus2}. Then, \eqref{nonlinear-holomorphic} follows at once.
\end{proof}

\begin{proof}[Proof of Theorem \ref{th:nonlinear-holomorphic}]
It is clear that the theorem is a straightforward consequence of the lemma.
\end{proof}

We conclude this section with the relevant case of a modified $p$-mean.

\begin{cor}[The case of $p$-means]
Let $F(s)=s^p/p$ with  $p>1$ and set $p'=p/(p-1)$.  Let $g:\Om\to\CC$ be differentiable in $\Om$ and let $c_p^\cJ(g,r)(z)$ be the unique minimum point in $\CC$ of the function defined by
$$
\cJ_p(c)=\tfrac1p\int_{\pa D_r(z)} \left||g(\zi)|^{p'-2}g(\zi) -c\,\ol{(\zi-z)}\right|^p dS_\zi, \ \ c\in\CC.
$$
Then, it holds that
\begin{equation}
\label{ceq20}
c_p^\cJ(g,r)(z)= \frac2{p} \,|g(z)|^{p'-2} g_{\ol{z}}(z) + o(1) \ \mbox{ as } \ r\to 0^+,
\end{equation}
at every $z\in\Om$ such that $g(z)\neq 0$. In particular, away from its zeroes, $g$ is holomorphic in $\Om$ if and only if $c_p^\cJ(g,r)(z)= o(1)$ as $r\to 0^+$.
\end{cor}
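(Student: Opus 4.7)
The plan is to obtain this corollary as a direct specialization of Lemma \ref{lem:nonlinear-holomorphic} (equivalently Theorem \ref{th:nonlinear-holomorphic}) to the case $F(s)=s^p/p$. First I would verify that $F$ satisfies the hypotheses of those results: $F\in C^1([0,\infty))\cap C^2((0,\infty))$ is strictly convex with $F(0)=F'(0)=0$ and $F(s)/s\to\infty$; moreover
$$
\La(s)=\frac{s\,F''(s)}{F'(s)}=\frac{s\,(p-1)s^{p-2}}{s^{p-1}}=p-1,
$$
so \eqref{Lambda-bounds} holds with $\La^-=\La^+=p-1>0$.

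Next I would compute the Young conjugate $G$. Since $F'(s)=s^{p-1}$ is strictly increasing with inverse $t\mapsto t^{1/(p-1)}=t^{p'-1}$, and $G'$ is characterized as the inverse of $F'$, we get $G'(t)=t^{p'-1}$. Therefore
$$
\frac{G'(|g(\zi)|)}{|g(\zi)|}\,g(\zi)=|g(\zi)|^{p'-2}g(\zi),
$$
which shows that the function $\cJ$ appearing in \eqref{defJ} coincides with the $\cJ_p$ of the corollary. In particular the existence and uniqueness of the minimizer $c_p^{\cJ}(g,r)(z)$ follows from Proposition \ref{lem:existence-uniqueness-minimum} applied to $F(s)=s^p/p$ and the auxiliary function $f(\zi)=|g(\zi)|^{p'-2}g(\zi)$.

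Finally I would plug into the asymptotic formula \eqref{nonlinear-holomorphic}. Since $\La$ is the constant $p-1$ we have $\La(G'(|g(z)|))=p-1$, so
$$
c_p^{\cJ}(g,r)(z)=\frac{2}{1+(p-1)}\,|g(z)|^{p'-2}\,g_{\ol z}(z)+o(1)=\frac{2}{p}\,|g(z)|^{p'-2}g_{\ol z}(z)+o(1)
$$
as $r\to 0^+$, yielding \eqref{ceq20}. For the holomorphic characterization, observe that at any $z$ with $g(z)\ne 0$ the factor $|g(z)|^{p'-2}$ is a strictly positive real number, so the asymptotic vanishing $c_p^{\cJ}(g,r)(z)=o(1)$ is equivalent to $g_{\ol z}(z)=0$; invoking this at every $z\in\Om$ where $g\ne 0$ gives the characterization of holomorphicity away from zeroes.

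Since the corollary is essentially a bookkeeping exercise starting from Lemma \ref{lem:nonlinear-holomorphic}, no real obstacle arises; the only points requiring minimal care are the identification $G'(t)=t^{p'-1}$ and the observation that the scalar prefactor $|g(z)|^{p'-2}$ is positive at non-zero points of $g$, so that it can be divided out without changing the conclusion.
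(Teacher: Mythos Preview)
Your proof is correct and follows essentially the same approach as the paper: specialize Lemma \ref{lem:nonlinear-holomorphic} by checking $\La(s)=p-1$, $G'(t)=t^{p'-1}$ (so $G'(t)/t=t^{p'-2}$), and then read off \eqref{ceq20} from \eqref{nonlinear-holomorphic}. The paper's proof is just the one-line observation that these identities hold; you have simply spelled out the verification in more detail.
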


\begin{proof}
It is easy to check that $G(t)=t^{p'}/p'$, $G'(t)/t=t^{p'-2}$, and $\La(s)=p-1$.
\end{proof}

\section{A classical asymptotic mean value property \\
for nonlinear Cauchy-Riemann systems}
\label{sec:classical-AMVP}

In this section, we shall prove Theorem \ref{th:dpp-theorem}.
We first modify the mean $c^\cF(f,r)$ in the spirit of Proposition \ref{th:new-mean-value} and Corollary \ref{cor:new-mean-value}, in order to obtain an operator acting on $f$ which is appropriate to set up the desired vectorial DPP. Thus, as explained in the introduction, with Remark \ref{rem:projection} in mind, we consider the function defined by
\begin{equation}
\label{def-F-ab}
\cG(a,b)=\int_{\pa D_r(z)} \left[F\bigl(|f(\zi)-a|\bigr)+ F\bigl(|f(\zi)-b\,\ol{(\zi-z)}|\bigr)\right] dS_\zi,
\end{equation}
for $a, b\in\CC$, and denote by $(a^\cG(f,r), b^\cG(f,r))$ the unique minimum point of $\cG$ on $\CC\times\CC$.
\par
Now, it is clear that we can apply Lemma \ref{lem:asymptotics-F} and Theorem \ref{th:asymptotics-F} to $b^\cG(f,r)$, since this is the minimum point in $\CC$ of the function $\CC\ni b\mapsto\cG(a^\cG(f,r),b)$.
Moreover, by similar computations, if $F'(s)=C\,s^\al+o(s^\al)$ as $s\to 0^+$ for some positive constants $C$ and $\al$, we easily infer that
\begin{equation}
\label{asymptotics-a}
a^\cG(f,r)(z)=f(z)+o(r) \ \mbox{ as } \ r\to 0^+,
\end{equation}
since the function $\CC\ni a\mapsto\cG(a, b^\cG(f,r))$ is minimized on $\CC$ at $a^\cG(f,r)$.
Thus, we consider the mean defined in \eqref{def-mu}, i.e.
\begin{equation}
\label{def-mu-G}
\mu^\cG(f,r)(z)=a^\cG(f,r)(z)+r\,b^\cG(f,r)(z),
\end{equation}
and summarize the above remarks in the following result.

\begin{lem}
\label{lem:ab-theorem}
Let $F$ satisfy the assumptions of Theorem \ref{th:asymptotics-F} and  that $F'(s)=C\,s^\al+o(s^\al)$ as $s\to 0^+$ for some positive constants $C$ and $\al$.
Let $f$ be differentiable in $\Om$. Then, it holds that
\begin{multline}
\label{abeq}
\mu^\cG(f,r)(z)= \\
f(z) + r\, \left[ f_{\ol{z}}(z)+\frac{\La(|f(z)|)-1}{\La(|f(z)|)+1}\,\frac{f(z)}{\ol{f(z)}}\,\ol{f}_{\ol{z}}(z)\right] + o(r) \ \mbox{ as } \ r\to 0^+
\end{multline}
at every $z\in\Om$ such that $f(z)\neq 0$.
\end{lem}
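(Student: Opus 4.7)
The plan is to exploit the fact that the functional $\cG$ defined in \eqref{def-F-ab} decouples additively into one piece depending only on $a$ and one depending only on $b$. Indeed,
$$
\cG(a,b)=\cG_1(a)+\cG_2(b), \quad \text{where } \cG_1(a)=\int_{\partial D_r(z)} F(|f(\zi)-a|)\,dS_\zi, \ \ \cG_2(b)=\cF(b),
$$
with $\cF$ as in \eqref{function-c}. By strict convexity, the unique minimum of $\cG$ on $\CC\times\CC$ is attained at the pair $(a^\cG(f,r),b^\cG(f,r))$ where $a^\cG(f,r)$ is the unique minimizer of $\cG_1$ and $b^\cG(f,r)$ is the unique minimizer of $\cG_2$. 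In particular, $b^\cG(f,r)=c^\cF(f,r)$ and I can invoke the results of Section 3 directly.

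For the $b$-component, applying Lemma \ref{lem:asymptotics-F} to the affine part of $f$ at $z$ (i.e.\ $\om=f(z)$, $\si=f_z(z)$, $\tau=f_{\ol z}(z)$) together with the comparison argument carried out in the proof of Theorem \ref{th:asymptotics-F} yields
$$
b^\cG(f,r)(z)=f_{\ol z}(z)+\frac{\La(|f(z)|)-1}{\La(|f(z)|)+1}\,\frac{f(z)}{\ol{f(z)}}\,\ol{f}_{\ol z}(z)+o(1) \quad \text{as } r\to 0^+,
$$
which is exactly the bracketed expression in \eqref{abeq}.

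For the $a$-component, the goal is to prove $a^\cG(f,r)(z)=f(z)+o(r)$ as $r\to 0^+$. I would first treat the affine approximation $f_a(\zi)=f(z)+f_z(z)(\zi-z)+f_{\ol z}(z)\ol{(\zi-z)}$. The Euler--Lagrange equation characterizing the minimizer of $\cG_1$ reads
$$
\int_{\partial D_r(z)} \frac{F'(|f(\zi)-a|)}{|f(\zi)-a|}\,(f(\zi)-a)\,dS_\zi=0,
$$
and the substitution $\zi=z+r\xi$, $\xi\in\cS$, together with the trivial observation that the map $\xi\mapsto f_a(z+r\xi)-f(z)=r[f_z(z)\xi+f_{\ol z}(z)\ol\xi]$ is odd, shows that $a=f(z)$ satisfies this equation exactly for $f_a$. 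By uniqueness of the minimizer, $a^\cG(f_a,r)(z)=f(z)$. It then remains to estimate $a^\cG(f,r)(z)-a^\cG(f_a,r)(z)$, which I would do by following the scheme used in the proof of Theorem \ref{th:asymptotics-F}: write the difference of the two Euler--Lagrange identities, apply the fundamental theorem of calculus along the segment joining the two integrands, and rescale by setting $\zi=z+r\xi$. Since both integrands involve arguments of order $r$, the behavior of $F'$ near the origin is what governs the estimate; this is precisely where the assumption $F'(s)=C s^{\al}+o(s^{\al})$ as $s\to 0^+$ enters, giving a uniform scaling $F'(r\eta)/\eta\asymp r^\al\eta^{\al-1}$ that makes the estimate of the Taylor remainder of $f-f_a=o(r)$ on $\partial D_r(z)$ translate into $a^\cG(f,r)-a^\cG(f_a,r)=o(r)$.

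Combining the two asymptotics yields \eqref{abeq} at once by \eqref{def-mu-G}. The main obstacle I anticipate is the last step, namely the quantitative comparison between $a^\cG(f,r)$ and $a^\cG(f_a,r)$: on $\partial D_r(z)$ the quantity $f-a^\cG$ vanishes at the rate $r$, so $F'$ is probed only at arguments tending to $0$, and without the prescribed behavior $F'(s)\sim C s^\al$ one cannot pass to the limit in the rescaled Euler--Lagrange identity in a uniform way. The other steps are essentially book-keeping, as they reduce to results already proved in Section 3.
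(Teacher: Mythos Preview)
Your proposal is correct and follows the same approach as the paper: decouple $\cG$ additively, invoke Lemma~\ref{lem:asymptotics-F} and the comparison argument of Theorem~\ref{th:asymptotics-F} for $b^\cG(f,r)=c^\cF(f,r)$, and establish \eqref{asymptotics-a} for $a^\cG(f,r)$ separately. Your treatment of the $a$-component is in fact more explicit than the paper's, which dismisses that step as ``by similar computations''; your oddness argument giving $a^\cG(f_a,r)(z)=f(z)$ exactly, and your identification of the role of the hypothesis $F'(s)=C\,s^\al+o(s^\al)$ (namely that on $\partial D_r(z)$ the quantity $f(\zi)-a$ is of order $r$, so $F'$ is probed only near the origin), are precisely what is needed to fill in that gap.
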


\begin{proof}[Proof of Theorem \ref{th:dpp-theorem}]
The theorem is clearly a straightforward corollary of Lemma \ref{lem:ab-theorem}.
\end{proof}

\begin{rem}\label{linkSec2}
{\rm
It is clear that $\mu^\cG(f,r)(z)$ generalizes the mean of Proposition \ref{th:new-mean-value} and Corollary \ref{cor:new-mean-value} (cf. \eqref{new-mean-value}). Indeed, if we select $F(s)=s^2/2$, by applying Theorem \ref{th:dpp-theorem} and keeping in mind \eqref{eq-f-p}, we obtain that $f$ is holomorphic in $\Omega$ if and only if it satisfies the asymptotic mean value property in $\Omega$, i.e.
$$
\mu^\cG(f,r)(z) = a^\cG(f,r)(z)+r\,b^\cG(f,r)(z) = f(z) + o(r) \quad \text{as }r\to 0^+.
$$
With this choice of $F$, it easy check  that the means $a^\cG$ and $b^\cG$ coincide with those mentioned in Remark \ref{rem:projection}.
}
\end{rem}

\begin{rem}
{\rm
Lemma \ref{lem:ab-theorem} informs us that the operator $f\mapsto \mu^\cG(f,r)$ can be taken as the basis of a vectorial DDP:
\begin{equation}
\label{dpp-f}
f^r=\mu^\cG(f^r,r).
\end{equation}
\par
The problem of setting up the appropriate (Dirichlet) boundary conditions for \eqref{dpp-f}  will be considered in a forthcoming paper.
Here, we just observe that the strategy based on comparison principles, which has been employed in the scalar case, may be no longer practicable. However, as an alternative, one can benefit from the continuity properties of the operator $f\mapsto \mu^\cG(f,r)$, which are naturally inherited from the fact that $\mu^\cG(f,r)$ can be thought of as an appropriate projection.
}
\end{rem}

\begin{rem}
{\rm
It is clear that imposing Dirichlet boundary conditions to the DDP \eqref{dpp-f} leads to a Dirichlet problem for the system \eqref{general-eq-f}. It is important to notice that we cannot expect that a solution of such a boundary problem be smooth everywhere in the relevant domain.
\par
This can be easily understood even when we consider the easiest case, namely, that of the Cauchy-Riemann system. In fact, we know that, in a simply connected domain, a (everywhere) holomorphic function is uniquely determined, up to an additive constant, by the values on the boundary of its real or imaginary part. Thus, imposing the boundary values of both real and imaginary parts of a complex-valued function subject to the Cauchy-Riemann equations will lead to a (everywhere smooth) holomorphic solution \textit{only occasionally}.
\par
Therefore, any decent definition of the solution of the Dirichlet problem for the Cauchy-Riemann system should be intended in some (non-smooth) generalized sense. These observations motivate the introduction of the concept of contact solutions and contact mean value property, which we shall consider in the next subsection.
}
\end{rem}

\section{The contact asymptotic mean value property \\
for nonlinear Cauchy-Riemann systems}
\label{subsec:contact-amvp}

In this section, we shall prove Theorem \ref{th:contact-asymptotic-characterization}. While in Theorem \ref{th:dpp-theorem} we considered differentiable functions, here, merely continuous functions will be allowed.

\subsection{Contact solutions }
\label{subsec:gen-contact-solutions}
In order to prepare the proof of Theorem \ref{th:contact-asymptotic-characterization}, we now  recall some notations, definitions and results  from the theory of vector-valued \textit{contact solution} of fully nonlinear PDE systems, recently proposed by N. Katzourakis in \cite{Ka}.
\par
The theory has been set up for general degenerate elliptic \textit{second order} $N\times n$ systems of partial differential equations. Here, we will only report the definitions for \textit{first order} $2\times 2$ systems that are relevant to our aims. In fact, for a function $u:\Om\to\RR^2$, we shall consider the system
\begin{equation}
\label{first-order-system}
\FF(x, u, Du)=0,
\end{equation}
where $\Om$ is a planar open set and  $\FF:\Om\times\RR^2\times\RR^{2\times 2}\to\RR^2$ is a locally bounded map, with variables $x\in\Om$, $\eta\in\RR^2$, and $P\in \RR^{2\times 2}$. Here, we stick to the notations used in \cite{Ka}. In particular, $\RR^{2\times 2}=\RR^2\otimes\RR^2$ denotes the space of $2\times 2$ matrices and $\RR^{2\times 2}_s$ is the subspace of symmetric matrices.
\par
In order to define contact solutions of the system \eqref{first-order-system}, we recall some notations from \cite{Ka}. We first define three operators on (column) vectors $\xi, \eta\in\RR^2$:
\begin{enumerate}[(i)]
\item
$\xi^\top$ is the \textit{transpose} of $\xi$, so that $\xi^\top\eta$ is the scalar product of $\xi$ and $\eta$ or, else, the projection of $\eta$ along $\xi$;
\item
$\xi^\perp=I-\xi\otimes\xi$, so that when $\xi$ is unitary  $\xi^\perp \eta$ is the projection of $\eta$ on the hyperplane orthogonal to $\xi$;
\item
$\xi\vee \eta=\frac12\,(\xi\otimes\eta+\eta\otimes\xi)=\frac12\,(\xi\,\eta^\top+\eta\,\xi^\top)$ is a $2\times 2$ symmetric matrix.
\end{enumerate}
\par
Let $u:\Om\to\RR^2$ be a continuous map. For $\xi\in\SS^1$, the \textit{first contact $\xi$-jet of $u$ at $x\in\Om$} is the set
$$
J^{1,\xi}u(x)=\bigl\{ P\in\RR^2\otimes\RR^2 :  \xi\vee[u(y)-u(x)-P\,(y-x)]\le o(|y-x|) \mbox{ as } y\to x\bigr\}.
$$
Here, we mean that there is a continuous matrix-valued map $M:\RR^2\setminus\{ 0\}\to\RR^{2\times 2}_s$ such that $M(y-x)$ bounds the matrix $\xi\vee[u(y)-u(x)-P\,(y-x)]$ from above (in the sense of matrices), and $|M(z)|/|z|\to 0$ as $|z|\to 0$.
\par
Next, since the system \eqref{general-eq-f} we want to treat has discontinuous coefficients, we need to define the \textit{$\xi$-envelope of $\FF$} as
\begin{equation}
\label{xi-envelope}
\xi^* \FF(x,\eta, P)=\limsup_{r\to 0}\{ \xi^\top \FF(y,\beta, Q): |y-x|+|\beta-\eta|+|Q-P|\le r\}.
\end{equation}
This function is upper semicontinuous.
\par
We are now ready to define a contact solution. Let $\FF:\Om\times\RR^2\times\RR^{2\times 2}\to\RR^2$ be locally bounded. A continuous map $u:\Om\to\RR^2$ is called a \textit{contact solution} of \eqref{first-order-system} in $\Om$
if, for every $x\in\Om$ and $\xi\in\SS^1$, we have that
$$
\xi^* \FF(x,u(x), P)\ge 0  \ \mbox{ for any } \ P\in J^{1,\xi}u(x).
$$
This definition generalizes that of viscosity solution given for scalar equations, which can be recovered by replacing the set $\SS^1$ by $\{+1, -1\}$.
\par
Similarly to the case of viscosity solutions, there is another equivalent definition of contact solutions, which extends the idea of test functions touching from above or below.
In fact, we first define a \textit{cone} with vertex at $x$ and some slope $L>0$ as the function defined by
$$
C_x(y)=L\,|y-x| \ \mbox{ for } \ y\in\RR^2.
$$
Then, we say that a map $\psi\in C^1(\RR^2;\RR^2)$ is a \textit{first contact $\xi$-map for $u$ at $x$} if $\psi(x)=u(x)$ and, for every cone $C_x$, it holds that
\begin{equation}
\label{cone-inequality}
|\xi^\perp (u-\psi)(y)|^2\le C_x(y)\,[-\xi^\top (u-\psi)(y)],
\end{equation}
for any $y$ in some neighborhood of $x$ in $\Om$. Notice that this definition is another way to say that
$$
|\xi^\perp (u-\psi)(y)|^2\le o(|y-x|)\,[-\xi^\top (u-\psi)(y)] \ \mbox{ as } \ y\to x.
$$
Moreover, since the left-hand side of \eqref{cone-inequality} is non-negative and $u(x)=\psi(x)$, the projection $\xi^\top (u-\psi)(y)$ along the line in $\RR^2$
spanned by $\xi$ has a (local) vanishing maximum at $y = x$, i.e.
$$
\xi^\top (u-\psi)\le 0=\xi^\top (u-\psi)(x)
$$
in a neighborhood of $x$.
\par
Thanks to \cite[Theorem 35]{Ka}, one has the equivalence between contact jets and contact maps, in the sense that it turns out that
\begin{multline*}
J^{1,\xi}u(x)=\bigl\{D \psi(x): \psi\in C^1(\Om;\RR^2): \psi(x)=u(x) \mbox{ and} \\
\mbox{\eqref{cone-inequality} holds near $x$ for any cone $C_x$}\bigr\}.
\end{multline*}
As a consequence, one obtains an equivalent definition. In fact, we say that
a continuous map $u:\Om\to\RR^2$ is a contact solution of \eqref{first-order-system} in $\Om$ if, for every $x\in\Om$ and $\xi\in\SS^1$, we have that
$$
\xi^* \FF(x,u(x), D\psi(x))\ge 0,
$$
for any first contact $\xi$-map $\psi\in C^1(\RR^2;\RR^2)$.

\smallskip

\subsection{Contact solutions of nonlinear Cauchy-Riemann systems}
\label{subsec:contact-solutions-CR}
In this section, we proceed to establish a convenient definition of generalized asymptotic mean value property. This will extend the definition of asymptotic mean value property in the viscosity sense, introduced in \cite{MPR1}, to the case of the system \eqref{general-eq-f}.
\par
For consistency, we proceed by adapting the ideas of
Section \ref{subsec:gen-contact-solutions} to the complex-variable framework adopted in this paper. Thus, we consider a first order differential equation
\begin{equation}
\label{general-complex-system}
\FF(f; f_z, f_{\ol{z}})=0,
\end{equation}
where $\FF=\FF_1+i\,\FF_2$ is a complex-valued operator in the variables
$(\om; \si, \tau)\in \CC\times\CC^{1\times 1}$. In the case under scrutiny in this paper (see Eq. \eqref{general-eq-f}), we have that
\begin{equation}
\label{complex-F}
\FF(\om; \si, \tau)=\tau+\frac{\La(|\om|)-1}{\La(|\om|)+1}\,\frac{\om}{\ol{\om}}\,\ol{\si} \ \mbox{ for } \ \om\ne 0.
\end{equation}
\par
We now define the corresponding $\xi$-envelope of $\FF$. If we identify $\xi\in\SS^1$ with the complex number $\xi_1+i\,\xi_2$, for any $(\om; \si, \tau)\in \CC\times\CC^{1\times 1}$ we define:
$$
\xi^* \FF(\om; \si,\tau)=\limsup_{r\to 0}\bigl\{ \,\re\, [\ol{\xi}\,\FF(\om';\si',\tau')]: |\om'-\om|+|\si'-\si|+|\tau'-\tau|\le r\bigr\}.
$$
\par
In the case of \eqref{complex-F}, if $\La(s)$ converges as $s\to 0^+$ to a limit $\La(0^+)\ge 0$, by choosing
$$
\om'=\frac14\,r\,e^{\frac{i}{2}\left[\pi\delta_{\La(0)}+\rm{Arg}(\xi)+\rm{Arg}(\si')\right]} \ \mbox{ with } \ \delta_{\La(0)}=\begin{cases} 0 & \mbox{if } \La(0^+)\geq 1, \\
1 & \mbox{if } \La(0^+)<1,  \end{cases}
$$
we then compute
\begin{equation}
\label{extended-F}
\xi^* \FF(\om; \si,\tau)=
\begin{cases}
\displaystyle \re (\ol{\xi}\,\tau)+\frac{\La(|\om|)-1}{\La(|\om|)+1}\,\re\left[\frac{\om}{\ol{\om}}\,\ol{\xi\,\si}\right] \ &\mbox{ for } \ \om\ne 0, \vspace{5pt}\\
\displaystyle \re (\ol{\xi}\,\tau)+\left|\frac{\La(0^+)-1}{\La(0^+)+1}\right|\,|\si| \ &\mbox{ for }  \om=0.
\end{cases}
\end{equation}
\par
Next, we adapt to the complex setting the definition of first contact $\xi$-jet. Thus, for a continuous function $f:\Om\to\CC$ and $z\in\Om$, we define the set
\begin{multline*}
J^{1,\xi}_\CC f(z)=
\bigl\{ (\si,\tau)\in\CC^{1\times 1} : \\
 \xi\vee[f(\zi)-f(z)-\si\,(\zi-z)-\tau\,\ol{(\zi-z)}]\le o(|\zi-z|) \mbox{ as } \zi\to z\bigr\}.
\end{multline*}
Here, the operator $\vee$ is defined as
$$
\xi\vee \eta= \frac12
\left[
\begin{matrix}
2(\re\xi)(\re\eta) &\im(\xi \eta)\\
\im(\xi \eta) &2(\im\xi)(\im\eta)
\end{matrix}
\right]
\ \mbox{ for } \ \xi, \eta\in\CC.
$$
\par
Let $\Om\subseteq\CC$. According to the theory recalled in Section \ref{subsec:gen-contact-solutions}, a continuous function $f:\Om\to\CC$ is then called a contact solution of \eqref{general-complex-system}--\eqref{complex-F} in $\Om$
if, for every $z\in\Om$ and $\xi\in\SS^1\subset\CC$, we have that
$$
\xi^* \FF(f(z); \si, \tau)\ge 0  \ \mbox{ for any } \ (\si, \tau)\in J^{1,\xi}_\CC f(z).
$$

As seen in Section \ref{subsec:gen-contact-solutions}, we have an equivalent definition, as follows. A continuous function $f:\Om\to\CC$ is a contact solution of \eqref{general-complex-system}--\eqref{complex-F} in $\Om$ if it holds that
$$
\xi^* \FF(f; \psi_z, \psi_{\ol{z}}) \ge 0 \ \mbox{ at } \ z,
$$
for any $\xi\in\SS^1\subset \CC$ and $(z, \psi)\in\Om\times C^1(\Om; \CC)$, with $\psi(z)=f(z)$ and $\psi(z)\ne 0$, such that
\begin{equation}
\label{cone-ine-comp}
|\xi^\perp (f-\psi)(\zi)|^2\le C_z(\zi)\,\left[-\re\left[\ol\xi (f-\psi)(\zi)\right]\right],
\end{equation}
near $z$ for any cone $C_z$.
Here,
$$
\xi^\perp\eta= \eta-\re(\ol{\xi}\,\eta)\,\xi \ \mbox{ for any $\eta$ in $\CC$}.
$$

\par
In the next section, we shall modify this definition, aiming at a characterization of it by a generalized AMVP based on our nonlinear mean \eqref{def-mu-G}.

\smallskip

\subsection{Nonlinear Cauchy-Riemann systems and the CAMVP }
\label{subsec:CAMVP}
In this section, we shall discuss on possible ways to extend the characterizations by means of AMVP, obtained in \cite{IMW} for nonlinear scalar equations, to the case of the nonlinear Cauchy-Riemann systems considered in this paper.
\par
Of course, as Section \ref{sec:mean-value-property} shows, the case of the classical Cauchy-Riemann system presents no difficulties, since all its solutions are smooth.
The situation drastically changes when some nonlinearity is introduced. As in the scalar case, one needs to establish a suitable notion of generalized solutions of the systems under scrutiny which, as seen in Section \ref{sec:classical-AMVP}, present some singularities/degenerations.
\par
Generally, the nonlinear systems under study are not variational, i.e. they are not derived as Euler equations of variational functionals. The use of weak solutions in Sobolev spaces is thus ruled out or at least difficult to pursue.
\par
Nowadays, the theory of viscosity solutions is well established to effectively treat non-variational \textit{scalar} partial differential equations. The recent appearence of its promising extension to systems proposed by N. Katzourakis --- the notion of contact solutions recalled in Section \ref{subsec:gen-contact-solutions} and adapted in Section \ref{subsec:contact-solutions-CR} to the case of nonlinear Cauchy-Riemann systems --- gave us a motivation to investigate on possible characterizations of contact solutions by means of suitable generalized AMVPs.
\par
Thus, in this section, we consider the nonlinear mean $\mu^\cG(f,r)(z)$ defined in \eqref{def-mu}. We have shown in Section \ref{sec:classical-AMVP} that, away from the zeroes of a solution, a notion of classical AMVP can be established rather easily.
\par
In order to extend the notion of AMVP to general continuous complex-valued functions, one may think of defining an extended operator starting from $\mu^\cG(f,r)(z)$. In other words, we may define the \textit{$\xi$-mean envelope} as
\begin{multline*}
\label{def-bb-M}
\xi^* \MM^\cG(\om; \si,\tau)=
\limsup_{r\to 0}\,\bigl\{ r^{-1}\,\re\left[\ol{\xi} \left[ \mu^\cG(f_a,r)(z)-\om'\right]\right]: \\
|\om'-\om|+|\si'-\si|+|\tau'-\tau|\le r\bigr\} \ \mbox{ for every } \ (\om; \si,\tau)\in \CC\times\CC^{1\times 1},
\end{multline*}
where $f_a(\zi)=\om'+\si'\,(\zi-z)+\tau'\,\ol{(\zi-z)}$.
\par
Then, in analogy with what done in Sections \ref{subsec:gen-contact-solutions} and \ref{subsec:contact-solutions-CR}, one idea would be to define some sort of generalized AMVP, based on $\xi^* \MM^\cG$, as follows. A continuous function $f:\Om\to\CC$ satisfies a generalized AMVP in $\Om$ if, for every $z\in\Om$ and $\xi\in\SS^1$, it holds that
$$
\xi^* \MM^\cG(\om; \si,\tau)\ge 0  \ \mbox{ for any } \ (\si, \tau)\in J^{1,\xi}_\CC f(z).
$$
\par
However, this strategy would be successful only if we prove that
$$
\xi^* \MM^\cG(\om; \si,\tau)=\xi^* \FF(\om; \si,\tau) \ \mbox{ for any } \  (\om; \si, \tau)\in \CC\times\CC^{1\times 1},
$$
where $\FF$ is the operator defined in \eqref{complex-F}. By Lemma \ref{lem:asymptotics-F}, this formula certainly holds true when $\om\ne 0$. Unfortunately, when $\om=0$, the formula is no longer true. In fact, an inspection of Proposition \ref{prop:equation-for-cr} and Lemma \ref{lem:asymptotics-F} informs us that, when $\om=\om(r)$ tends to $0$ as a $O(r)$, the functions $\be(r)$ and $\ga(r)$ may not vanish along, as it would be desirable, in view of \ref{extended-F}. This fact is even more evident if we consider the homogeneous case in which $p\,F(s)=s^p$.
\par
Therefore, we must resort to a different strategy. In fact, we propose to follow the ideas contained in \cite{JLM} and \cite{MPR1}.
\par
In \cite{JLM}, the case of the (game theoretic) $p$-Laplace equation is considered and the relevant real-valued second order operator is defined by
\begin{equation}
\label{p-laplace}
\FF(\eta, X)=\tr(X)+(p-2)\,\frac{\lan X\,\eta,\eta\ran}{|\eta|^2},
\end{equation}
where $\eta\in\RR^N$ and $X$ is an $N\times N$ symmetric matrix.  Differently from our case,  this is a \textit{second order} (scalar) elliptic operator. Nevertheless, one can see an analogy with \eqref{complex-F}, if one identifies the complex number $\om$ with the vector $\eta$ and the pair $(\si,\tau)$ with the matrix $X$.
\par
In the case of \eqref{p-laplace}, by choosing $\xi\in\{-1, +1\}=\SS^0$, it is easily seen that the notion of contact solution introduced in \cite{Ka} reduces to the well-known definition of (viscosity) subsolution (for $\xi=+1$) and supersolution (for $\xi=-1$).
Now, in \cite{JLM}, it is pointed out that, in certain instances, the definition of viscosity solution based on the relevant extended operator, which in this case reads as
$$
\xi^* \FF(\eta, X)= \begin{cases} \tr(X)+(p-2)^+ E(X)-(p-2)^- e(X) & \mbox{for }\xi=+1, \\
-\tr(X)+(p-2)^- E(X)-(p-2)^+ e(X) & \mbox{for }\xi=-1,
\end{cases}
$$
does not guarantee effective comparison results. Here, $E(X)$ and $e(X)$ are the maximum and minimum eigenvalues of $X$ and $(p-2)^{\pm}$ are the positive and the negative part of $p-2$ respectively. (See \cite{Ka} for the details concerning second order operators.)
\par
Thus, a different definition of viscosity solution is adopted.
A continuous function in $\Om$ is then declared a viscosity solution of
$$
\De u+(p-2)\,\frac{\lan \na^2 u \na u,\na u\ran}{|\na u|^2}=0 \ \mbox{ in } \ \Om
$$
if, for any pair $(x, \phi)\in\Om\times C^2(\Om)$ such that $\phi$ touches $u$ from above ($+$ in \eqref{sottosopra1}) or $\phi$ touches $u$ from below ($-$ in \eqref{sottosopra1}) at $x$  and $\na\phi(x)\ne 0$, it holds that
\begin{equation}
\label{sottosopra1}
\pm\left[ \De\phi+(p-2)\,\frac{\lan \na^2 \phi \na\phi,\na\phi\ran}{|\na\phi|^2}\right]\ge 0 \ \mbox{ at } \ x.
\end{equation}
We recall that $\phi$ touches $u$ from above at $x$ if $u-\phi$ attains a strict local maximum at $x$ with $u(x)=\phi(x)$.
(Note the analogy of this notion with that of contact map proposed in \cite{Ka} for quite general second order elliptic systems and recalled in Secion \ref{subsec:gen-contact-solutions}.)
\par
This definition of viscosity solution restores the desired comparison properties.
As a by-product of this approach, in \cite{MPR1} viscosity solutions of the game-theoretic $p$-Laplace equation are characterized by means of a generalized AMVP, as follows. (See also \cite{IMW} for an alternative definition of AMVP.)
\par
A continuous function $u$ is said to satisfy in $\Om$ an \textit{AMVP in the viscosity sense} if, for any pair $(x, \phi)\in\Om\times C^2(\Om)$ such that $\phi$ touches $u$ from above ($+$ in \eqref{sottosopra2}) or $\phi$ touches $u$ from below ($-$ in \eqref{sottosopra2}) at $x$ and  $\na\phi(x)\ne 0$, it holds that
\begin{equation}
\label{sottosopra2}
\pm\bigl[\mu_p^r(\phi)(x)-\phi(x)\bigr]\ge o(r^2) \ \mbox{ as } \ r\to 0^+.
\end{equation}
Here, $\mu_p^r(\phi)$ is a relevant mean in \cite{MPR1}.
\par
Therefore, motivated by this analysis, we propose an alternative definition of contact solution of the system \eqref{general-complex-system}--\eqref{complex-F} and a related definition of generalized AMVP.
\par
Let $\Om\subseteq\CC$. We say that a continuous function $f:\Om\to\CC$ is a \textit{contact solution} of the nonlinear Cauchy-Riemann system \eqref{general-eq-f} in $\Om$ (equivalently of \eqref{general-complex-system}--\eqref{complex-F}) if, for every $z\in\Om$ such that $f(z)\ne 0$ and $\xi\in\SS^1\subset\CC$, we have that
$$
\re\bigl[\ol{\xi}\, \FF(f(z); \si, \tau)\bigr]\ge 0  \ \mbox{ for any } \ (\si, \tau)\in J^{1,\xi}_\CC f(z).
$$
\par
Bearing in mind the correspondence between contact jets and contact maps recalled in Secion \ref{subsec:gen-contact-solutions}, we can also say that $f$ is a contact solution in $\Om$ if
$$
\re\bigl[\ol{\xi}\, \FF(f; \psi_z, \psi_{\ol{z}})\bigr]\ge 0 \ \mbox{ at } \ z,
$$
for any $\xi\in\SS^1$ and $(z, \psi)\in\Om\times C^1(\Om; \CC)$, with $\psi(z)=f(z)$ and $\psi(z)\ne 0$, such that \eqref{cone-ine-comp} holds near $z$ for any cone $C_z$.
\par
Associated to this definition of contact solution, we propose our generalized AMVP.
\par
We say that $f$ satisfies the \textit{contact asymptotic mean value property (CAMVP)} in $\Om$ if, for every $z\in\Om$ such that $f(z)\ne 0$ and $\xi\in\SS^1\subset\CC$, we have that
$$
\re\left\{\ol{\xi} \left[ \mu^\cG(f_a,r)(z)-f(z)\right]\right\}\ge o(r) \ \mbox{ as } \ r\to 0^+,
$$
for every $(\si, \tau)\in J^{1,\xi}_\CC f(z)$. Here, the operator $\mu^\cG$ is that defined in \eqref{def-mu-G} and $f_a(\zi)=f(z)+\si\,(\zi-z)+\tau\,\ol{(\zi-z)}$.
\par
Equivalently, $f$ satisfies the (CAMVP) in $\Om$ if
$$
\re\left\{\ol{\xi} \left[ \mu^\cG(\psi,r)(z)-\psi(z)\right]\right\}\ge o(r) \ \mbox{ as } \ r\to 0^+,
$$
for any $\xi\in\SS^1$ and $(z, \psi)\in\Om\times C^1(\Om; \CC)$, with $\psi(z)=f(z)$ and $\psi(z)\ne 0$, such that \eqref{cone-ine-comp} holds near $z$ for any cone $C_z$.
\par
We are finally ready to prove our characterization contained in Theorem \ref{th:contact-asymptotic-characterization}. We restate it here in detail, for the sake of clarity.

\begin{thm}
Let $F\in C^1([0,\infty))\cap C^2((0,\infty))$ be a strictly convex function such that  $F(0)=0$ and $F'(s)=C\,s^\al+o(s^\al)$ as $s\to 0^+$ for some positive constants $C$ and $\al$.
Set
$$
\La(s)=\frac{s\,F''(s)}{F'(s)} \ \mbox{ for } \ s>0
$$
and assume that, for some constants $0<\La^-\le\La^+$, it holds that
$$
\La^-\le\La(s)\le\La^+ \ \mbox{ if } \ s>0.
$$
\par
A continuous function $f:\Om\to\CC$ is a contact solution of \eqref{general-eq-f}, i.e.
$$
f_{\ol{z}}+\frac{\La(|f|)-1}{\La(|f|)+1}\,\frac{f}{\ol{f}}\,\ol{f}_{\ol{z}}=0 \ \mbox{ in } \ \Om,
$$
if and only if $f$ satisfies the CAMVP in $\Om$.
\end{thm}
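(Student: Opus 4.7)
My plan is to reduce the equivalence to a pointwise identity obtained from Lemma~\ref{lem:ab-theorem}. Both the contact solution condition and the CAMVP are quantified over the same data: a point $z\in\Om$ with $f(z)\ne 0$, a direction $\xi\in\SS^1$, and a pair $(\si,\tau)\in J^{1,\xi}_\CC f(z)$. Accordingly, it will be enough to prove, for each such triple, the equivalence of the two scalar inequalities
\[
\re\bigl\{\ol{\xi}\,[\mu^\cG(f_a,r)(z)-f(z)]\bigr\}\ge o(r)\ \text{as }r\to 0^+
\quad\Longleftrightarrow\quad
\re\bigl[\ol{\xi}\,\FF(f(z);\si,\tau)\bigr]\ge 0,
\]
where $f_a(\zi)=f(z)+\si\,(\zi-z)+\tau\,\ol{(\zi-z)}$ and $\FF$ is the operator in \eqref{complex-F}.

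The key observation is that, although $f$ is merely assumed continuous, the mean $\mu^\cG(f_a,r)(z)$ that enters the CAMVP is evaluated on the affine, hence smooth, function $f_a$. A direct computation gives the Wirtinger derivatives $(f_a)_z(z)=\si$ and $(f_a)_{\ol{z}}(z)=\tau$, while $f_a(z)=f(z)\ne 0$. I would therefore apply Lemma~\ref{lem:ab-theorem} to $f_a$, obtaining
\[
\mu^\cG(f_a,r)(z)=f(z)+r\,\FF(f(z);\si,\tau)+o(r)\quad\text{as }r\to 0^+,
\]
since the bracketed quantity in \eqref{abeq}, specialised to $f=f_a$, reduces precisely to $\tau+\tfrac{\La(|f(z)|)-1}{\La(|f(z)|)+1}\tfrac{f(z)}{\ol{f(z)}}\ol{\si}=\FF(f(z);\si,\tau)$ by \eqref{complex-F}.

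Multiplying by $\ol{\xi}$, taking real parts and dividing by $r>0$ I then get
\[
\frac{1}{r}\,\re\bigl\{\ol{\xi}\,[\mu^\cG(f_a,r)(z)-f(z)]\bigr\}=\re\bigl[\ol{\xi}\,\FF(f(z);\si,\tau)\bigr]+o(1),
\]
and passing to the $\liminf$ as $r\to 0^+$---with the standard interpretation that ``$\ge o(r)$'' means $\liminf r^{-1}(\cdot)\ge 0$---the CAMVP inequality becomes equivalent to $\re[\ol{\xi}\,\FF(f(z);\si,\tau)]\ge 0$, which is the contact solution condition at the given triple. Quantifying uniformly over $\xi\in\SS^1$, $(\si,\tau)\in J^{1,\xi}_\CC f(z)$, and $z\in\Om$ with $f(z)\ne 0$, the two definitions coincide.

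I do not anticipate any genuine obstacle: once Lemma~\ref{lem:ab-theorem} is in hand, the only delicate point is the precise interpretation of ``$\ge o(r)$'', and the sharp $o(r)$ error term in \eqref{abeq} makes the reduction routine. The whole argument stays away from the zero set $\{f=0\}$, which matches the fact that both the contact solution condition and the CAMVP are formulated only at points where $f(z)\ne 0$.
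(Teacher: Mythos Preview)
Your proposal is correct and follows essentially the same route as the paper. The paper's proof unpacks $\mu^\cG(f_a,r)$ into its two pieces, using a shift identity for $a^\cG$ together with \eqref{asymptotics-a} and then invoking Lemma~\ref{lem:asymptotics-F} for $b^\cG$; you instead invoke Lemma~\ref{lem:ab-theorem} directly, which already packages those two ingredients into the single asymptotic expansion \eqref{abeq}. Either way the argument reduces immediately to the pointwise identity $\mu^\cG(f_a,r)(z)=f(z)+r\,\FF(f(z);\si,\tau)+o(r)$, from which the equivalence is read off.
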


\begin{proof}
We first notice that
$$
a^\cG(f_a,r)=f(z)+a^\cG(f_a-f(z),r)
$$
and  we know from \eqref{asymptotics-a} that
$a^\cG(f_a-f(z),r)=o(r)$ as $r\to 0^+$.
Hence, the desired conclusion clear follows by an application of Lemma \ref{lem:asymptotics-F}.
\end{proof}

\appendix

\section{The proof of Proposition \ref{th:new-mean-value}}

In this appendix, we will carry out the proof of Proposition \ref{th:new-mean-value}. We begin by showing that the mean value property \eqref{new-mean-value} entails regularity.
\begin{lem}
\label{lem:regularity}
Let $\Om$ be an open subset of $\CC$ and, for fixed $r>0$, suppose that $\Om$ contains at least a closed ball of radius $2r$.
Set $\Om_r=\{z\in\Om: \ol{D_r(z)}\subset\Om\}$.
\par
Let $f:\Om\to\CC$ be an essentially bounded function on $\Om$ such that
\eqref{new-mean-value} holds
for any disk $D_r(z)$ with $\ol{D_r(z)}\subset\Om$.
Then, $f\in C^\infty(\Om_{2r})$.
\end{lem}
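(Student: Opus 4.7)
The plan is to reinterpret the mean value formula \eqref{new-mean-value} as a convolution identity $f = f \ast \widetilde K$ on $\Om_r$, where
$$
\widetilde K(w) \;=\; \frac{1}{\pi r^2}\Bigl[1 - \tfrac{2}{r}\,w\Bigr]\chi_{D_r(0)}(w)
$$
(up to the sign convention in the convolution) is bounded and compactly supported, hence belongs to $L^1(\CC)\cap L^\infty(\CC)$. Since $f\in L^\infty(\Om)$ and $\widetilde K\in L^1(\CC)$, the continuity of translation in $L^1$ forces $f\ast\widetilde K$ to be continuous on $\Om_r$; after modifying $f$ on a null set one obtains $f\in C^0(\Om_r)$.

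Next, I would promote continuity to $C^1$. For $z\in\Om_{2r}$ one has $\ol{D_r(z)}\subset\Om_r$, so $f$ is continuous on the closed disk over which the integral in \eqref{new-mean-value} is taken. Splitting
$$
\pi r^2 f(z) \;=\; \int_{D_r(z)} f(\zi)\,dA_\zi \;+\; \frac{2}{r}\int_{D_r(z)} f(\zi)(\zi-z)\,dA_\zi,
$$
and applying the Reynolds transport theorem to each moving-domain integral (valid because the integrand is continuous up to the boundary, and the explicit $z$-dependence of the weight $(\zi-z)$ is polynomial) produces boundary and area integrals of $f$ on $\ol{D_r(z)}$ that depend continuously on $z$. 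Hence $f_z$ and $f_{\ol z}$ exist and are continuous on $\Om_{2r}$, i.e.\ $f\in C^1(\Om_{2r})$.

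To reach $C^\infty$, I would iterate. The simplest route is to feed $f = f\ast\widetilde K$ into itself, obtaining $f = f\ast\widetilde K^{\ast n}$ on successive subdomains. Since $\widetilde K$ is of bounded variation with compact support, $\widetilde K^{\ast 2}$ is Lipschitz, and an induction shows $\widetilde K^{\ast n}\in C^{n-2}(\CC)$. A variant that preserves the target set $\Om_{2r}$ is to differentiate the formula obtained in the previous step repeatedly and invoke the Cauchy--Pompeiu identity (complex integration by parts) to rewrite boundary integrals of $f$ as area integrals with kernels depending smoothly on $z$; each additional derivative then calls upon only the regularity of $f$ already known on $\ol{D_r(z)}\subset\Om_r$, not higher regularity along the moving boundary $\partial D_r(z)$.

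I expect the main technical obstacle to be precisely this last point. A naive Reynolds iteration shrinks the domain of smoothness at each step, because a second $z$-differentiation of a boundary integral of $f$ naïvely requires $f$ to be differentiable on $\partial D_r(z)$, which lies in $\Om_r$ but not a priori in $\Om_{2r}$. The crucial manoeuvre is to move derivatives off $f$ onto the smooth factor $(\zi-z)$ and the surface measure via Cauchy--Pompeiu, so that all higher derivatives of $f$ on $\Om_{2r}$ end up expressed through \emph{interior} integrals of $f$ against kernels smooth in $z$, to which the Step~1 argument suffices and no further regularity of $f$ on the boundary circle is required.
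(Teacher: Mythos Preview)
Your convolution setup is the same as the paper's, but the bootstrap mechanism diverges. The paper does \emph{not} use Reynolds transport or Cauchy--Pompeiu. Instead, after writing $f(z)=\int_\CC j_r(\zi)\,f(z-\zi)\,dA_\zi$ with $j_r(\zi)=\frac{1}{\pi r^2}\bigl(1-\tfrac{2\zi}{r}\bigr)\cX_{D_r}(\zi)$, it first obtains a direct Lipschitz estimate on $\Om_r$ (by bounding $\int|j_r(z_1-\zi)-j_r(z_2-\zi)|\,dA_\zi$ via the area of the symmetric difference of two disks), invokes Rademacher, and then passes the difference quotient \emph{under} the convolution by dominated convergence. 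The point is that $j_r$ carries no $z$-dependence in the convolution form, so $\pa_x f$ satisfies \emph{exactly the same integral identity} as $f$. The iteration is therefore self-similar: the partial derivatives are again bounded, again satisfy the identity, hence are again Lipschitz, and so on. This bypasses your Reynolds step entirely and turns the higher-order argument into a one-line repetition rather than a new computation with boundary terms or repeated kernel convolutions.

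Your route via $\widetilde K^{\ast n}$ is workable but, as you note, costs $r$ of domain per convolution; the Cauchy--Pompeiu fix you sketch is plausible but not carried out. The paper's self-similar bootstrap is both shorter and more transparent, though on a strict reading it, too, loses $r$ at each differentiation (the paper's concluding ``iterating the same argument'' is terse on this). In the application the lemma is only invoked where \eqref{new-mean-value} holds for \emph{all} small radii, so the shrinkage is immaterial; but the cleaner mechanism---derivatives inherit the very same identity---is the idea you are missing.
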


\begin{proof}
We proceed as in \cite{MM} with some adjustments. Set:
$$
j_r(\zi)=\frac1{\pi\,r^2}\left(1-\frac{2\,\zi}{r}\right) \cX_{D_r}(\zi).
$$
Then, by extending $f$ as $f\,\cX_\Om$ to $\CC$, \eqref{new-mean-value} reads as a convolution:
$$
f(z)=\int_\Om f(\zi)\,j_r(z-\zi) \,dA_\zi=\int_\CC f(\zi)\,j_r(z-\zi) \,dA_\zi.
$$
Take $z_1, z_2\in\Om_r$ sufficiently close to one another, say $|z_1-z_2|<r$. We have that
\begin{multline*}
|f(z_1)-f(z_2)|\le \int_\CC |j_r(z_1-\zi)-j_r(z_2-\zi)||f(\zi)|\,dA_\zi\le \\
\nr f\nr_\infty\,\int_\CC |j_r(z_1-\zi)-j_r(z_2-\zi)|\,dA_\zi\le
C\,\nr f\nr_\infty\,\frac{|z_1-z_2|}{r},
\end{multline*}
for some numerical constant $C>0$. The last inequality is obtained by standard manipulations and by observing that
$$
\int_\CC\bigl|\cX_{D_r(z_1)}-\cX_{D_r(z_2)}\bigr|\,dA_\zi=|D_r(z_1)\triangle D_r(z_2)|\le 4r\,|z_1-z_2|.
$$
Here, $D_r(z_1)\triangle D_r(z_2)$ denotes the symmetric difference of the two balls.
If instead $|z_1-z_2|\ge r$, then we easily infer that
$$
|f(z_1)-f(z_2)|\le 2\,\nr f\nr_\infty\le 2 \nr f\nr_\infty \frac{|z_1-z_2|}{r}.
$$
Thus, $f$ is Lipschitz continuous in $\Om_r$, and hence is almost everywhere differentiable, by Rademacher's theorem.
\par
Now, fix $z\in\Om_{2r}$. Since $\Om_{2r}$ is open, $z+t$ is in $\Om_{2r}$, for any  sufficiently small real-valued increment $t$. Then we have that
$$
\frac{f(z+t)-f(z)}{t}=\int_\CC j_r(\zi)\,\frac{f(z-\zi+t)-f(z-\zi)}{t}\,dA_\zi.
$$
Since the integrand is bounded by a constant independent of $t$ and converges almost everywhere to $j_r(\zi)\,\pa_x f(z-\zi)$, by the Dominated Convergence Theorem we get that
$$
\pa_x f(z)=\int_{\CC} j_r(\zi)\,\pa_x f(z-\zi)\,dA_\zi.
$$
This holds at every point $z\in\Om_{2r}$. By choosing a purely imaginary increment $i\,t$, we obtain a similar formula for $\pa_y f(z)$.
Thus, we have proved that $f$ has partial derivatives in $\Om_{2r}$. These must be locally Lipschitz continuous, since we have proved that they satisfy the same integral equation as $f$.
Hence, $f$ is of class $C^1(\Om_{2r})$.
\par
The desired conclusion then follows by iterating the same argument.
\end{proof}

\smallskip

\begin{proof}[Proof of Proposition \ref{th:new-mean-value}]
(i) Suppose that \eqref{new-mean-value} holds for any disk $D_r(z)$ with $\ol{D_r(z)}\subset\Om$. Lemma \ref{lem:regularity} tells us that $f\in C^\infty(\Om)$. Thus, we know that
$$
f(\zi)=f(z)+f_\zi(z)\,(\zi-z)+f_{\ol{\zi}}(z)\,\ol{(\zi-z)}+o(|\zi-z|),
$$
uniformly as $\zi\to z$ on a neighborhood of $z$. Therefore, we can write that
\begin{multline*}
\frac1{|D_r(z)|}\int_{D_r(z)} f(\zi)\,dA_\zi= \\
\frac1{|D_r(z)|}\int_{D_r(z)}\left[f(z)+ f_\zi(z)\,(\zi-z)+f_{\ol{\zi}}(z)\,\ol{(\zi-z)}+o(|\zi-z|)\right]dA_\zi= \\
f(z)+o(r) \ \mbox{ as } \ r\to 0^+,
\end{multline*}
since
$$
\int_{D_r(z)} (\zi-z)\,dA_\zi=\int_{D_r(z)} \ol{(\zi-z)}\,dA_\zi=0.
$$
Similarly, we have that
$$
\frac{2}{r^2}\,\frac1{|D_r(z)|}\int_{D_r(z)} f(\zi)\,(\zi-z)\,dA_\zi=f_{\ol{\zi}}(z)+o(1) \ \mbox{ as } \ r\to 0^+,
$$
since
$$
\int_{D_r(z)} (\zi-z)^2\,dA_\zi=0 \ \mbox{ and } \ \frac1{|D_r(z)|}\int_{D_r(z)}|\zi-z|^2 dA_\zi=\frac12\,r^2.
$$
All in all, \eqref{new-mean-value}, the two asymptotic formulas, and some calculations  give that
$f_{\ol{\zi}}(z)=o(1)$  as $r\to 0^+$,
and hence $f_{\ol{\zi}}(z)=0$. Since $z$ can be chosen arbitrarily in $\Om$, we conclude that $f$ is holomorphic in $\Om$.
\par
(ii) Notice that holomorphic functions are characterized by the \textit{Cauchy Integral Theorem}, i.e. $f$ is holomorphic in a simply connected domain $\Om$ if and only if
$$
\int_\ga f(\zi)\,d\zi=0,
$$
for any closed rectifiable curve $\ga$ contained in $\Om$ (see \cite{Ma}). In particular, if $f$ is holomorphic in $\Om$, we have that
$$
\int_{\pa D_r(z)} f(\zi)\,(\zi-z)\,dS_\zi=-i\,r \int_{\pa D_r(z)} f(\zi)\,d\zi=0,
$$
and hence
$$
\int_{D_r(z)} f(\zi)\,(\zi-z)\,dA_\zi=\int_0^r\left(\int_{\pa D_\rho(z)} f(\zi)\,(\zi-z)\,dS_\zi\right) d\rho=0,
$$
for any disk $D_r(z)$ with $\ol{D_r(z)}\subset\Om$.
Since $f$ is also harmonic in $\Om$, we have that
$$
f(z)=\frac1{|D_r(z)|}\int_{D_r(z)} f(\zi)\,dA_\zi,
$$
and hence we obtain \eqref{new-mean-value}.
\end{proof}

\section*{Acknowledgements}
R. Durastanti has been  supported by the Italian Ministry of University and Research under PON “Ricerca e Innovazione” 2014-2020 (PON R\&I) - AZIONE IV.6 – Contratti di Ricerca su tematiche Green – CUP E65F21003200003.
\par
R. Magnanini is partially supported by the PRIN grant n. 201758MTR2 of the Italian Ministry of University (MUR).
\par
Both authors have also been supported by the Gruppo Nazionale per l'Analisi Matematica, la Probabilit\`a e Applicazioni (GNAMPA) of the Istituto Nazionale di Alta Matematica (INdAM).

\end{document}